\let\sma\wedge
\newcommand{\htp}{\simeq}
\renewcommand{\to}{\mathchoice{\longrightarrow}{\rightarrow}{\rightarrow}{\rightarrow}}
\newcommand{\bsma}{\stackrel{B}{\sma}}
\DeclareFontFamily{OMS}{rsfs}{\skewchar\font'60}
\DeclareFontShape{OMS}{rsfs}{m}{n}{<-5>rsfs5 <5-7>rsfs7 <7->rsfs10 }{}
\DeclareSymbolFont{rsfs}{OMS}{rsfs}{m}{n}
\DeclareSymbolFontAlphabet{\scr}{rsfs}
\newcommand{\sI}{\scr{I}}
\newcommand{\sJ}{\scr{J}}
\newcommand{\cC}{{\mathcal C}}
\let\catsymbfont\mathcal
\newcommand{\aC}{{\catsymbfont{C}}}
\newcommand{\aI}{{\catsymbfont{I}}}
\newcommand{\aU}{{\catsymbfont{U}}}
\newcommand{\aV}{{\catsymbfont{V}}}
\newcommand{\D}{\mathcal{D}}
\newcommand{\I}{\mathcal{I}}
\newcommand{\J}{\mathcal{J}}
\newcommand{\bJ}{\mathbb{J}}
\newcommand{\bN}{{\mathbb{N}}}
\newcommand{\bR}{{\mathbb{R}}}
\newcommand{\bn}{\mathbf{n}}
\newcommand{\TC}{\textnormal{TC}}
\newcommand{\THH}{\textnormal{THH}}
\newcommand{\HH}{\textnormal{HH}}
\DeclareMathOperator*{\hocolim}{hocolim}
\def\quickop#1{\expandafter\DeclareMathOperator\csname
#1\endcsname{#1}}
\numberwithin{equation}{section}
\newtheorem{theorem}[equation]{Theorem}
\newtheorem*{theorem*}{Theorem}
\newtheorem{corollary}[equation]{Corollary}
\newtheorem{lemma}[equation]{Lemma}
\newtheorem{proposition}[equation]{Proposition}
\theoremstyle{definition}
\newtheorem{definition}[equation]{Definition}
\newtheorem{remark}[equation]{Remark}
\begin{document}

\title{Interpreting the B\"okstedt smash product as the norm}

\author[V. Angeltveit]{Vigleik Angeltveit}
\address{Australian National University, Canberra, ACT 0200, Australia}
\email{vigleik.angeltveit@anu.edu.au}
\thanks{V.~Angeltveit was supported in part by an NSF All-Institutes Postdoctoral Fellowship administered by the Mathematical Sciences Research Institute through its core grant DMS-0441170, NSF grant DMS-0805917, and an Australian Research Council Discovery Grant}
\author[A.J. Blumberg]{Andrew J. Blumberg}
\address{Department of Mathematics, University of Texas,
Austin, TX \ 78712}
\email{blumberg@math.utexas.edu}
\thanks{A.~J.~Blumberg was supported in part by NSF grant DMS-0906105}
\author[T. Gerhardt]{Teena Gerhardt}
\thanks{T.~Gerhardt was supported in part by NSF DMS--1007083 and NSF DMS--1149408}
\address{Michigan State University, East Lansing, MI 48824}
\email{teena@math.msu.edu}
\author[M.A.Hill]{Michael A.~Hill}
\address{University of Virginia \\ Charlottesville, VA 22904}
\email{mikehill@virginia.edu}
\thanks{M.~A.~Hill was supported in part by NSF DMS--0906285, DARPA FA9550-07-1-0555, and the Sloan Foundation}
\author[T. Lawson]{Tyler Lawson}
\address{University of Minnesota \\ Minneapolis, MN 55455}
\email{tlawson@math.umn.edu}
\thanks{T.~Lawson was supported in part by NSF DMS--1206008 and the Sloan Foundation.}


\begin{abstract}
This paper compares two models of the equivariant homotopy type of the
smash powers of a spectrum, namely the ``B\"okstedt smash product''
and the Hill-Hopkins-Ravenel norm.
\end{abstract}

\maketitle
\setcounter{tocdepth}{1}

\section{Introduction}\label{sec:intro}

In any symmetric monoidal category $\aC$, a basic construction of a
$C_n$-equivariant object arises from the smash power 
\[
X \mapsto X^{\otimes n} = \underbrace{X \otimes X \otimes \ldots \otimes X}_{n}.
\]
The homotopical analysis of such constructions in the category of
spaces and spectra is of classical importance.  For instance, Steenrod
operations arise from the homotopy coinvariants $C^*(X^{\times
n})_{h\Sigma_n}$ of $X^{\times n}$ with respect to the $\Sigma_n$-action.  More general power operations arise from the analysis of this
kind of construction in the category of spectra.

Our focus in this paper is the analysis of the smash power
construction in the equivariant stable category.  In contrast to the
setting of spaces, there are two distinct fixed-point functors in the
category of equivariant spectra: for a $G$-equivariant $X$ and
subgroup $H \subset G$, we can construct ``categorical fixed points'' $X^H$
and ``geometric fixed points'' $\Phi^{H} X$.  The
interplay and contrast between these two functors encodes much of the
complexity of the equivariant stable category.

An important application of the smash power construction of spectra
arises in the study of trace methods for computing algebraic
$K$-theory.  Algebraic $K$-theory has been revolutionized in the last
20 years by the development of trace methods.  Following ideas of
Goodwillie~\cite{Goodwillie}, first B\"okstedt~\cite{Bokstedt} and then
B\"okstedt, Hsiang, and Madsen~\cite{BHM} constructed topological
analogues of Hochschild homology and negative cyclic homology, called topological Hochschild homology, $\THH(R)$, and topological cyclic homology, $\TC(R)$. They also constructed a ``cyclotomic'' trace map 
\[
K(R) \to \TC(R) \to \THH(R)
\]
lifting the classical Dennis trace $K(R) \to \HH(R)$.  The
relationship between relative $K$-theory and relative $\TC$ is well
understood (after $p$-completion) in many
circumstances~\cite{McCarthy, Dundas}.  The target of the trace,
$\TC(R)$, is constructed from an equivariant structure that arises on
$\THH(R)$; exploiting the tools of equivariant homotopy theory makes
$\TC(R)$ relatively computable.

The equivariant structure on $\THH(R)$ arises from the interpretation
of (topological) Hochschild homology as the cyclic bar construction.
In the category of spaces, for a group-like topological monoid $M$
there is an equivalence $N^{\cyc} M \htp LBM$, where for a space $X$, $LX = \Map(S^1,X)$
is the free (unbased) loop space on $X$.  The $S^1$-action on $LX$ has
an unusual property.  For a finite subgroup $H \subset S^1$,
$S^1/H \cong S^1$, and pulling back along this isomorphism induces a
homeomorphism of $S^1$-spaces 
\[
\Map(S^1,X)^H \cong \Map(S^1, X).
\]
In the category of $S^1$-spectra, the analogue of this structure is
called a cyclotomic structure.  In this context, $\THH$ being a cyclotomic
spectrum boils down to having (suitably coherent) ``diagonal''
equivalences of geometric fixed points
\[
\Phi^{C_n}(X^{\sma n}) \htp X.
\]  

B\"okstedt introduced $\THH(R)$ before the invention of symmetric
monoidal categories of spectra; he invented coherence machinery
(which anticipated the development of symmetric spectra, as explained
in~\cite{shipley}) to handle the smash product and proved that his
model of smash powers had the right homotopy type.
After the invention of modern categories of spectra, it became
possible to give definitions of $\THH(R)$ that simply computed the cyclic
bar construction in the usual way, circumventing the complexity of
B\"okstedt's coherence machinery.  However, it was believed that the
smash-power in this context did not have the right fixed points,
and so a direct construction of $\THH(R)$ as an equivariant spectrum
using the modern categories of spectra was thought to be out of reach (e.g., 
see~\cite[2.5.9]{Madsen} and~\cite[\S IX.3.9]{EKMM}).

Very recently, the solution to the Kervaire invariant one problem by
the fourth author, Hopkins, and Ravenel involved development of a
``norm'' functor $N_H^G$ from $H$-spectra to $G$-spectra which has the
correct diagonal fixed points~\cite{HHR}.  In particular, there is an
equivalence
\[
R \to \Phi^G N_e^{G} R
\]
for any finite group $G$ and cofibrant $R$.  When $G = C_k$, the
underlying spectrum of $N_e^{C_k}R$ is precisely the smash power
$R^{\sma k}$.

This behavior strongly suggests that the norm should agree with
B\"okstedt's version of the smash powers.  The purpose of this paper
is to make this precise, by constructing an explicit comparison
between the two as equivariant spectra.  We build on earlier
analysis by Shipley~\cite{shipley} which interpreted the B\"okstedt
construction in terms of a ``detection functor'' in symmetric spectra,
and is somewhat related to analysis done by Lun{\o}e-Nielsen and
Rognes~\cite{LNR}.  

In the sequel to this paper, we use this
comparison to show that the norm construction $N_e^{S^1} R$ for a
cofibrant ring spectrum $R$ directly yields a model of $\THH(R)$ as
a cyclotomic spectrum.  This new model of $\THH$ in terms of the norm
allows us to define new ``relative'' versions of $\TC$; in one, we
work with the smash product over a commutative ring spectrum $A$, and
in the other, we start with ring orthogonal $C_n$-spectra as input.
The computational machinery developed in~\cite{HHR} then can be
applied to provide new approaches for computation, ultimately leading
to calculations in algebraic $K$-theory.

To state our main results, we begin by fixing some notation and
definitions for our indexing categories. Let $\I$ denote B\"okstedt's
indexing category, i.e., the category with objects $\textbf{n}
= \{1,2,\ldots,n\}$ and morphisms all injections. Let $\J$ denote the
subcategory of $\I$ with the same objects but maps the subset
inclusions. Notice that $\J$ has a unique map from $\bf m$ to $\bf n$
for $m<n$.  

Recall the following definition of the B\"okstedt smash product:
\begin{definition}
Let  $X(1), X(2), \ldots, X(k)$ be symmetric spectra in spaces.
We define the orthogonal B\"okstedt smash product to the be orthogonal spectrum with $W$\textsuperscript{th} space 
\[
\big(X(1) \bsma \ldots \bsma X(k)\big)_{W} = \hocolim_{(\bf n_1,\ldots, \bf
n_k) \in \I^k} \Omega^{n_1+\ldots+n_k}\big(X(1)_{n_1} \sma \ldots \sma X(k)_{n_k} \sma S^W\big). 
\]
\end{definition}

Note that more generally we get a continuous functor from finite
based $CW$-complexes to spaces by plugging in a space $A$ in place of
$S^W$; by restriction, we can extract an orthogonal spectrum or
symmetric spectrum.

Fix a complete $C_k$-universe $\aU$.  For our model of the equivariant
stable category, we use the category of orthogonal
$C_k$-spectra~\cite{MM}.  Specializing to the case of the smash-power of
a single spectrum $X$, observe that $X^{\bsma k}$ becomes a
$C_k$-equivariant orthogonal spectrum indexed on $\aU$ as we let $W$
vary through the finite-dimensional $C_k$-inner product spaces that embed in $\aU$.  Here $C_k$
acts by conjugation on the mapping space, permutation on the indexing
category, and the action on $S^W$.

The following is the main theorem of this paper.

\begin{theorem}\label{Main}
Let $X$ be a cofibrant orthogonal spectrum, and let $\tilde{X}$ denote
the underlying symmetric spectrum (of topological spaces).  Then there is an isomorphism in the
homotopy category of $C_k$-equivariant orthogonal spectra
\[
\tilde{X}^{\bsma k} \cong N_e^{C_k} X,
\]
where $N_e^{C_k} X$ is the Hill-Hopkins-Ravenel norm.

Moreover, this isomorphism is induced by a zig-zag of maps natural in
$X$.
\end{theorem}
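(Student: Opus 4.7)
The plan is to construct a natural zig-zag of $C_k$-equivariant maps connecting $\tilde{X}^{\bsma k}$ with $N_e^{C_k} X$, passing through the ordinary orthogonal smash power $X^{\sma k}$ (with its $\Sigma_k$-action restricted to $C_k$) as an intermediate object, and then to verify that each map induces a weak equivalence on $H$-fixed points for every subgroup $H \subseteq C_k$. On the one hand, the definition of the Hill--Hopkins--Ravenel norm realizes $N_e^{C_k}X$ as a change-of-universe of $X^{\sma k}$ (indexed on a trivial universe) to the complete $C_k$-universe $\aU$, so one arm of the zig-zag is essentially a left Kan extension that is a Quillen equivalence on cofibrant input. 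On the other hand, there is a canonical natural map between $X^{\sma k}$ and the B\"okstedt hocolim, coming from including a chosen object of $\I^k$ into the diagram and applying the unit of the $(\Sigma, \Omega)$-adjunction at each factor. Identifying both arms as $C_k$-equivariant equivalences would prove the theorem.

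First, I would handle the non-equivariant version: for cofibrant $X$, showing that the natural comparison between $X^{\sma k}$ and $\tilde{X}^{\bsma k}$ is a weak equivalence of underlying spectra. This is essentially B\"okstedt's original analysis, repackaged through Shipley's detection-functor framework~\cite{shipley}, and relies on the contractibility of $\I$ so that the hocolim is computed at any single object up to equivalence. The delicate point, even non-equivariantly, is to align the indexing so that the orthogonal spectrum structure retained on $\tilde{X}$ is compatible with the orthogonal smash product; this is where the cofibrancy of $X$ is used.

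I would then upgrade to the equivariant statement. For each subgroup $H \subseteq C_k$, the $H$-fixed points of the B\"okstedt smash product should be computed by restricting the $\I^k$-hocolim to its $H$-fixed subcategory $(\I^k)^H$; the key combinatorial input is that this subcategory is homotopy-cofinal in $\I^k$ (inherited from the contractibility of $\I$ together with the standard analysis of fixed categories of group actions on contractible diagrams), so that the restricted hocolim is equivalent to the full one. The restricted hocolim decomposes according to the $H$-orbits of $\{1, \ldots, k\}$, and the loops $\Omega^{n_1+\ldots+n_k}$ likewise decompose into loops on representations indexed by these orbits---which is precisely how $N_e^{C_k}X$ behaves on $H$-fixed points. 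The main obstacle, I expect, is to match this orbit decomposition of the loop/suspension coordinates with the nontrivial $C_k$-representations arising from universe change in the definition of the norm: the B\"okstedt construction uses only trivial representations $\bR^{n_i}$, and one must rewrite the hocolim so that its natural representation-theoretic content matches what comes out of $N_e^{C_k}X$ after induction and change-of-universe. I expect this reconciliation to be the crux of the argument, and to dictate the precise choice of the intermediate model and of the zig-zag.
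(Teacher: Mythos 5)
Your high-level picture---a zig-zag between the B\"okstedt construction and the norm, verified to be an equivalence on $H$-fixed points for each $H\subseteq C_k$, with the crux being the reconciliation of the trivial-representation loop coordinates in the B\"okstedt hocolim with the regular-representation coordinates coming from the norm's universe change---is pointing in the same direction as the paper's proof, and you correctly identify where the difficulties lie. But several of the specific steps you propose either are not quite well-defined or hide the actual technical work, and as written the argument has genuine gaps.

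First, the intermediate object and the comparison map. You propose to compare $\tilde{X}^{\bsma k}$ to $X^{\sma k}$ by ``including a chosen object of $\I^k$ into the diagram.'' For this map to be $C_k$-equivariant the chosen object must be fixed by $C_k$, i.e.\ a diagonal object $(\bn,\ldots,\bn)$. But the map from the value at one object into a hocolim is not a weak equivalence, and it certainly does not interact well with the $H$-fixed point functor for proper subgroups $H$. What actually works---and what the paper does---is to use the diagonal \emph{inclusion of diagrams} $\Delta\colon \hocolim_{\I}\Omega^{kn}(N_e^{C_k}\tilde X_n \sma S^W)\to \hocolim_{\I^k}\Omega^{n_1+\cdots+n_k}(\tilde X_{n_1}\sma\cdots\sma\tilde X_{n_k}\sma S^W)$, and then to continue changing the indexing category to $\J$ and then to $\bJ$ so as to reach the canonical homotopy presentation of the norm. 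A single ``change-of-universe is a Quillen equivalence'' arm on the other side is also not what happens: the change of universe is a point-set functor, and what one really needs is that the norm of a cofibrant spectrum can be presented as the hocolim $\hocolim_{\bJ}\Omega^{V\otimes\rho}(N_e^{C_k}(X_V)\sma S^W)$, which is where the regular-representation coordinates enter.

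Second, the fixed-point argument. You assert that $(\I^k)^H$ is ``homotopy cofinal in $\I^k$, inherited from the contractibility of $\I$.'' That is not the mechanism, and in the usual sense of cofinality it is not even true. The correct statement (the paper's Lemma A.1, following Hesselholt--Madsen) is that, for the \emph{two-sided bar construction} model of the homotopy colimit, passage to $H$-fixed points commutes with geometric realization and with the products in the bar construction, so the natural map $\hocolim_{\cC^G}F(c)^G\to(\hocolim_{\cC}F(c))^G$ is a homeomorphism on this model, not merely an equivalence deducible from cofinality. Proposition~\ref{prop:hmondrugs} in the paper makes this precise for the specific diagonal maps $\Delta_d$. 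Your proposal as stated would need this point-set identification to be spelled out; contractibility of $\I$ by itself does not give it.

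Third---and this is the largest omission---you have not addressed why the $\I$-indexed and $\J$-indexed (telescope) hocolims agree equivariantly, i.e.\ the map $D_1$. Non-equivariantly this is B\"okstedt's telescope lemma (Shipley's version for semistable symmetric spectra), but equivariantly one needs a refined telescope lemma in which connectivity is measured on each $H$-fixed point, together with explicit connectivity estimates for the norm on spaces, for the equivariant Freudenthal suspension, and for equivariant mapping spaces. This is the technical heart of the proof (Lemmas~\ref{lem:Telescope}--\ref{lem:mappingspaceconnectivity} and Proposition~\ref{prop:freudenthal}, and the long estimate in Theorem~4.5 of the paper), and it is precisely where the cofibrancy hypothesis and the structure of $\tilde X_n$ for $X=S^{-V}\wedge Y$ get used quantitatively. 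Your proposal flags ``aligning the indexing'' as delicate but does not supply the argument; without it the comparison of indexing categories is unjustified.

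In short: your strategy is aimed at the right target, but the specific zig-zag you propose is not the one that exists, the fixed-point step needs the bar-construction model and a Hesselholt--Madsen-style homeomorphism rather than cofinality, and the equivariant telescope lemma with the attendant connectivity bookkeeping is missing entirely.
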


To explain the proof, we introduce some further notation. Let $\bJ$ denote the category of finite dimensional real inner product spaces $V$ in $\bR^\infty$, with morphisms the inclusions $V \to W$ in $\bR^\infty$. We regard this as a discrete category. We will denote by $N_e^{C_k}$ both the norm from spectra to $C_k$-spectra as well as the norm from spaces to $C_k$-spaces; the usage will be clear from context.


Let $\rho$ denote the regular representation of $C_k$. We now have the
following main comparison diagram in the category of $C_k$-spaces: 
\begin{equation}\label{eq:main}
\xymatrix@R=2em{
(\tilde{X}^{\bsma k})_W \ar[d]^{=} \\
\hocolim\limits_{\I^k} \Omega^{n_1 + \ldots +
n_k}(\tilde{X}_{n_1} \sma \ldots \sma \tilde{X}_{n_k} \sma S^W) \\
\hocolim\limits_{\I} \Omega^{kn}(\tilde{X}_{n} \sma \ldots \sma \tilde{X}_{n} \sma S^W) = \hocolim\limits_{\I} \Omega^{kn}(N_e^{C_k} (\tilde{X}_{n}) \sma S^W) \ar[u]_{\Delta} 
\\
\hocolim\limits_{\J} \Omega^{kn}(N_e^{C_k} (\tilde{X}_{n}) \sma S^W) \ar[u]_{D_1} \ar[d]^{D_2}
\\
\hocolim\limits_{\bJ} \Omega^{V \otimes \rho}(N_e^{C_k}(X_{V}) \sma
S^W)
}
\end{equation}
The map labeled $\Delta$ is the diagonal inclusion.  The maps
labeled $D_i$ are induced from the natural inclusion $\J \to \I$ and
the functor $\J \to \bJ$ given (on objects) by ${\bf
m} \mapsto \bR^m$.

The proof of the main theorem amounts to showing that all of the
vertical maps assemble to weak equivalences of equivariant orthogonal
spectra.  There are essentially three parts of the argument:
\begin{enumerate}
\item establishing a stable equivalence of equivariant orthogonal spectra
\[
\left\{ W \mapsto \hocolim_{\bJ} \Omega^{V \otimes \rho}(N_e^{C_k}(X_{V}) \sma
S^W) \right\} \htp (N_e^{C_k} X)
\] 
(Section~\ref{sec:canonical}),
\item  establishing the comparisons associated to changing the indexing
category (Section~\ref{sec:diagrams}), and
\item studying the diagonal map $\Delta$ (Section~\ref{sec:diagonal}).
\end{enumerate}

\bigskip

The authors would like to thank the American Institute of Mathematics (AIM)
for its support through the SQuaREs program and MSRI for its
hospitality while some of this work was being done.  The authors would
like to thank Mike Mandell for his assistance, as well as Lars
Hesselholt and Mike Hopkins for interesting and useful conversations.

\section{A brief review of equivariant orthogonal spectra and the norm}

We begin with a very rapid review of the details of the theory of
equivariant orthogonal spectra that we need.  The authoritative source
on this subject is~\cite{MM}; we also require a number of the
refinements of the foundations developed in~\cite{HHR}.  See
also~\cite[\S 2]{BM} for a concise review.

\subsection{Basic definitions}
\label{sec:definitions}

Fix a finite group $G$.  A $G$-universe $\aU$ is a $G$-inner product
space isomorphic to $\bigoplus_{n \in \bN} V$ for some finite-dimensional
representation $V$ which contains a trivial
representation~\cite[II.1.1]{MM}.  We say $\aU$ is complete when it
contains all irreducible representations of $G$.  Denote by $\aV=\aV(\aU)$
the collection of all finite-dimensional $G$-inner product spaces that
are isomorphic to sub-$G$-inner product spaces of
$\aU$~\cite[II.2.1]{MM}.

Given two representations $V,W \in \aV$, we define $\sI_G(V,W)$ to
be the $G$-space of non-equivariant isometric isomorphisms $V \to W$.
This construction allows us to regard $\aV$ as the objects of a
category $\sI_G$ enriched in $G$-spaces.  An orthogonal $G$-spectrum
is now defined to be an enriched functor $X$ from $\sI_G$ to the
category of $G$-spaces equipped with an equivariant natural
transformation 
\[
X_V \sma S^W \to X_{V \oplus W}
\]
that satisfies the obvious transitivity property~\cite[\S II.2]{MM}.

\begin{remark}
A convenient aspect of this definition is that any continuous (i.e.,
enriched) functor from $G$-spaces of the homeomorphism type of finite
$G$-CW complexes to $G$-spaces gives rise to an orthogonal
$G$-spectrum by restriction of the domain.  This observation is
relevant to our work because many of the orthogonal $G$-spectra we
study arise in this fashion.
\end{remark}

The category of orthogonal $G$-spectra has a symmetric monoidal
structure induced by the Day convolution.  In order to describe this,
we recall an alternate description of the category of orthogonal
$G$-spectra~\cite[\S II.4]{MM}.  For representations $V$ and $W$, let
$\sJ_G(V,W)$ denote the Thom space of the orthogonal complement
$G$-bundle of linear isometries $V \to W$.  We now have a category
$\sJ_G$ with the same objects as $\sI_G$, and an orthogonal
$G$-spectrum is precisely an enriched functor from $\sJ_G$ to
$G$-spaces.  Now the Day convolution defines the symmetric monoidal
structure, with unit the sphere spectrum $S$.

This description of orthogonal $G$-spectra is also useful because it
provides an explicit description of the adjoint of the evaluation
functors.  For a $G$-representation $V$ and a based $G$-space $X$, we
define the shift-desuspension spectrum $S^{-V}\wedge X$ to have $W$'th space
$\sJ_G(V,W) \sma X$~\cite[II.4.6]{MM}.  Of particular importance are
the negative $V$-spheres $S^{-V}$.

We now turn to the homotopy theory of orthogonal $G$-spectra.  The
category of orthogonal $G$-spectra admits a model structure such that
the homotopy category is equivalent to the equivariant stable
category.  To describe this model structure, we need to review the
notion of equivariant stable homotopy groups.

The homotopy groups of an orthogonal $G$-spectrum $X$ are defined for
a subgroup $H \subset G$ and an integer $q$ by
\[
\pi_q^H(X) =
\begin{cases}
\quad\displaystyle 
\colim_{V \in \bJ} \pi_{q}\big((\Omega^{V}X_V)^{H}\big)&\text{if }q\geq 0,\\[1em]
\quad\displaystyle 
\colim_{\bR^{-q} \subset V \in \bJ} \pi_0\big((\Omega^{V-\bR^{-q}}X_V)^{H}\big)&\text{if }q< 0\\
\end{cases}
\]
(see~\cite[\S III.3.2]{MM}).  We define the stable equivalences to be
the maps $X \to Y$ that induce isomorphisms for all homotopy groups.
There is a cofibrantly-generated monoidal model structure on
orthogonal $G$-spectra in which the weak equivalences are the stable
equivalences~\cite[\S IV.2]{MM}.

The stable equivalences are chosen so that the canonical map
\[
S^{-V}\wedge S^{V}\to S^{0}
\]
is a stable equivalence for all $V$~\cite[III.4.5]{MM}. In particular,
this means that for any $G$-space $X$, we have a stable equivalence 
\begin{equation}\label{eqn:StabEquiv}
S^{-V}\wedge S^{V}\wedge X\to \Sigma^{\infty} X.
\end{equation}

We now recall an elementary properties of shifts of orthogonal
spectra.  For an orthogonal $G$-spectrum $X$, recall that
$\Omega^{V}X$ denotes the orthogonal $G$-spectrum with 
\[
(\Omega^{V}X)_{W}=\Omega^{V}(X_{W}).
\]

\begin{proposition}\label{prop:LoopsDesuspension}
If $X$ is a $G$-space, then we have a natural stable equivalence 
\[
S^{-V}\wedge X\to \Omega^{V}\Sigma^{\infty}X
\]
of orthogonal $G$-spectra.
\end{proposition}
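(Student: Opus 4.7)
The plan is to construct the map $\alpha\colon S^{-V}\wedge X \to \Omega^V\Sigma^\infty X$ as the adjunct of the stable equivalence \eqref{eqn:StabEquiv}, and then to verify that it is a stable equivalence. Since $(-)\wedge S^V$ is left adjoint to $\Omega^V$ on orthogonal $G$-spectra, rewriting \eqref{eqn:StabEquiv} via the symmetric monoidal structure as $(S^{-V}\wedge X)\wedge S^V \to \Sigma^\infty X$ and passing to adjuncts produces a natural map
\[
\alpha\colon S^{-V}\wedge X \longrightarrow \Omega^V\Sigma^\infty X.
\]
At level $W$, this is the map $\sJ_G(V,W)\wedge X \to \Omega^V(S^W\wedge X)$ sending a point $(\beta,w;x)$, where $\beta\colon V\to W$ is a linear isometry and $w\in W\ominus\beta(V)$, to the loop $v\mapsto (\beta(v)+w)\wedge x$.

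To show $\alpha$ is a stable equivalence, I would use the triangle identities of the adjunction to identify \eqref{eqn:StabEquiv} with the composite
\[
(S^{-V}\wedge X)\wedge S^V \xrightarrow{\alpha\wedge S^V} (\Omega^V\Sigma^\infty X)\wedge S^V \xrightarrow{\varepsilon} \Sigma^\infty X,
\]
where $\varepsilon$ is the counit of the adjunction $(-)\wedge S^V \dashv \Omega^V$. Because the counit is known to be a stable equivalence on sufficiently cofibrant input, two-out-of-three forces $\alpha\wedge S^V$ to be a stable equivalence. Since $S^V$ is invertible in the equivariant stable category, smashing with $S^V$ reflects stable equivalences, whence $\alpha$ itself is a stable equivalence.

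The main obstacle will be pinning down the cofibrancy hypotheses needed to know that the counit $\varepsilon$ really is a stable equivalence on the spectra at hand. A cleaner alternative that sidesteps this is to compute $\pi_q^H$ directly on both sides for each subgroup $H\subset G$ and each integer $q$, and to recognize both as the $RO(G)$-graded homotopy group $\pi_{q+V}^H(\Sigma^\infty X)$: on the target side by using $\Omega^W\Omega^V=\Omega^{W\oplus V}$ inside the defining colimit and reindexing, and on the source side via the standard computation of $\pi_*^H$ of a shift desuspension, where cofinality lets one restrict to $W\supset V$ and collapse $\sJ_G(V,W)$ onto the fiber $S^{W\ominus V}$ over the identity isometry. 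One then checks that $\alpha_*$ realizes these two identifications with the same isomorphism, so that $\alpha$ is a $\pi_*$-isomorphism and hence a stable equivalence.
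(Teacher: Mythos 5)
Your main argument is essentially the paper's proof: both define the map as the adjunct of the stable equivalence $S^{-V}\wedge S^{V}\wedge X\to \Sigma^{\infty}X$, use that smashing with $S^{V}$ detects stable equivalences (\cite[III.3.6]{MM}), and reduce to the unit/counit of the $(\Sigma^{V},\Omega^{V})$-adjunction being a $\pi_*$-isomorphism (\cite[III.3.8]{MM}), the paper phrasing it via the unit and you via the counit. Your worry about cofibrancy hypotheses for the counit is unnecessary, since \cite[III.3.8]{MM} holds for all orthogonal $G$-spectra, so the fallback computation of $\pi_q^H$ is not needed.
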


\begin{proof}
The map in question is the adjoint of the stable equivalence given by
Equation~\ref{eqn:StabEquiv}: 
\[
S^{-V}\wedge X\to \Omega^{V}\Sigma^{\infty}X.
\]
By~\cite[III.3.6]{MM}, a map $f$ of orthogonal $G$-spectra is a stable
equivalence if and only if $\Sigma^V f$ is a stable equivalence.
Therefore, it suffices to check that 
\[
S^V \sma S^{-V}\wedge X\to \Sigma^V \Omega^{V}\Sigma^{\infty}X.
\]
But using Equation~\ref{eqn:StabEquiv} again, we can reduce to the
question of whether the unit
\[
\Sigma^\infty X \to \Sigma^V \Omega^V \Sigma^{\infty} X
\]
is a weak equivalence, which follows by~\cite[III.3.8]{MM}.
\end{proof}

\subsection{Canonical Homotopy Presentations}
\label{sec:canonical}
Up to stable equivalence, there is a convenient way of expressing
the homotopy type of an orthogonal $G$-spectrum in terms of its
spaces; this is referred to as the ``canonical homotopy presentation''
in~\cite[B.1.5]{HHR}.  Specifically, let 
\[
\ldots \subset V_n \subset V_{n+1} \subset \ldots
\]
be an exhausting sequence of orthogonal $G$-representations (by which
we mean that every finite dimensional $G$-representation $V$ in
$\aV(\aU)$ can be embedded in some $V_n$) and write $W_n$ for the
orthogonal complement of $V_n$ in $V_{n+1}$.  We obtain zig-zags of
the form 
\[
\xymatrix{
S^{-V_n} \sma X_{V_n} & \ar[l]_-{\htp} S^{-(V_n \oplus W_n)}\sma S^{W_n} \sma
X_{V_n} \ar[r] & S^{-V_{n+1}} \sma X_{V_{n+1}} \\
}
\] 
and taking the homotopy colimit over the first $m$ stages we get
something equivalent to $S^{-V_m} \sma X_{V_m}$.  Therefore, we write
\[
\hocolim_{V_n} S^{-V_n} \sma X_{V_n}
\]
to denote the homotopy colimit over this zig-zag.  The result is
stably equivalent to $X$.

Combining this with Proposition~\ref{prop:LoopsDesuspension} gives a convenient description of orthogonal spectra.

\begin{lemma}\label{lem:ReCanonical}
If $X$ is an orthogonal $G$-spectrum and $\{V_{n}\}$ is an exhausting
sequence of orthogonal $G$-representations, then we have a stable
equivalence  
\[
X\simeq \Big(W\mapsto \hocolim_{V_{n}} \Omega^{V_{n}}\big(X_{V_{n}}\wedge S^{W}\big) \Big).
\]
\end{lemma}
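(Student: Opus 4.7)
The plan is to deduce this from the canonical homotopy presentation recalled just above together with Proposition~\ref{prop:LoopsDesuspension}. First I would recognize the right-hand side as a levelwise description of a familiar orthogonal spectrum. For a based $G$-space $A$, the $W$'th space of $\Sigma^{\infty}A$ is (up to stable equivalence) $A \sma S^{W}$, so the $W$'th space of $\Omega^{V_{n}}\Sigma^{\infty}X_{V_{n}}$ is $\Omega^{V_{n}}(X_{V_{n}}\sma S^{W})$. Because homotopy colimits of orthogonal $G$-spectra over the zig-zag indexing category in question are computed levelwise (using that the structure maps in the zig-zag of the canonical homotopy presentation are built from cofibrations, so the hocolim commutes with evaluation at each $W$), the functor
\[
W \;\longmapsto\; \hocolim_{V_{n}} \Omega^{V_{n}}\bigl(X_{V_{n}}\sma S^{W}\bigr)
\]
is precisely the $W$'th space of the orthogonal $G$-spectrum $\hocolim_{V_{n}}\Omega^{V_{n}}\Sigma^{\infty}X_{V_{n}}$.

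Next I would apply Proposition~\ref{prop:LoopsDesuspension} to produce, for each $n$, a natural stable equivalence
\[
S^{-V_{n}}\wedge X_{V_{n}} \;\longrightarrow\; \Omega^{V_{n}}\Sigma^{\infty}X_{V_{n}}.
\]
Naturality in the relevant isometries shows that these maps assemble into a map of zig-zag diagrams of orthogonal $G$-spectra, where on the source side one has the zig-zag
\[
S^{-V_{n}}\wedge X_{V_{n}} \;\longleftarrow\; S^{-(V_{n}\oplus W_{n})}\wedge S^{W_{n}}\wedge X_{V_{n}} \;\longrightarrow\; S^{-V_{n+1}}\wedge X_{V_{n+1}}
\]
defining the canonical homotopy presentation. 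Because the maps being compared are levelwise stable equivalences between objects that are well-behaved with respect to the hocolim construction, the induced map of homotopy colimits is a stable equivalence.

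To finish, I would invoke the canonical homotopy presentation from~\cite[B.1.5]{HHR} recalled in Section~\ref{sec:canonical}, which identifies the source homotopy colimit $\hocolim_{V_{n}} S^{-V_{n}}\wedge X_{V_{n}}$ with $X$ up to stable equivalence. Composing these stable equivalences yields
\[
X \;\simeq\; \hocolim_{V_{n}} S^{-V_{n}}\wedge X_{V_{n}} \;\simeq\; \hocolim_{V_{n}} \Omega^{V_{n}}\Sigma^{\infty}X_{V_{n}} \;=\; \Bigl(W\mapsto \hocolim_{V_{n}} \Omega^{V_{n}}\bigl(X_{V_{n}}\wedge S^{W}\bigr)\Bigr),
\]
as required.

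The main obstacle I expect is bookkeeping for the homotopy colimit: both the identification of the levelwise spaces of $\hocolim \Omega^{V_{n}}\Sigma^{\infty}X_{V_{n}}$ with the indicated space-level hocolims, and the claim that a levelwise map of diagrams by stable equivalences induces a stable equivalence on the hocolim, require that the diagrams be cofibrant enough for hocolim to be homotopy invariant and to commute with evaluation. These are standard properties of the zig-zag diagrams used in the canonical homotopy presentation, but they are the technical point that needs verification; once they are in hand, the rest of the argument is a formal assembly of Proposition~\ref{prop:LoopsDesuspension} with the canonical homotopy presentation.
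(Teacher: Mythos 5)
Your proposal is correct and takes the same route the paper intends: the paper states Lemma~\ref{lem:ReCanonical} immediately after the sentence ``Combining this [the canonical homotopy presentation] with Proposition~\ref{prop:LoopsDesuspension} gives a convenient description of orthogonal spectra,'' and offers no further proof; your write-up simply fills in that combination, applying Proposition~\ref{prop:LoopsDesuspension} to each term $S^{-V_n}\wedge X_{V_n}$ of the zig-zag, identifying the level spaces of $\hocolim_{V_n}\Omega^{V_n}\Sigma^{\infty}X_{V_n}$, and invoking homotopy invariance of the levelwise hocolim. One small inaccuracy worth fixing: the $W$\textsuperscript{th} space of $\Sigma^{\infty}A$ \emph{is} $A\wedge S^{W}$ on the nose, not merely up to stable equivalence, so the parenthetical qualifier there should be dropped.
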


The exhausting sequences also provide a way to prove that the map $D_2$ of (\ref{eq:main}) is an equivalence. Any choice of exhausting sequence $V_{n}$ in $\mathbb R^{\infty}$ defines a functor $\J\to\bJ$ by
\[
\bn\mapsto V_{n}.
\]
Stated in this way, the sequence being exhausting is the same thing as the homotopy cofinality of this functor  (e.g., see \cite[A.5]{Lind}). 

\begin{lemma}
Let $X$ be an orthogonal $G$ spectrum.  For any exhausting sequence $V_{n}$ in $\mathbb R^{\infty}$, the map
\[
\xymatrix{
\hocolim\limits_{n\in \J} \Omega^{V_{n}\otimes\rho}(X_{V_{n}\otimes\rho} \sma S^W) \ar[r]^-{D_2}
&
\hocolim\limits_{V\in\bJ} \Omega^{V\otimes\rho}(X_{V\otimes\rho} \sma
S^W) \\
}
\]
is an equivariant equivalence.

In particular, the map $D_{2}$ is always an equivalence.
\end{lemma}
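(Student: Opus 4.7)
The plan is to recognize $D_2$ as the restriction map induced by the functor $F \colon \J \to \bJ$ that sends $\mathbf{n}$ to $V_n$ and sends the unique morphism $\mathbf{m} \to \mathbf{n}$ (for $m \leq n$) to the subspace inclusion $V_m \subset V_n$. By the standard Bousfield--Kan invariance result, restriction along any homotopy cofinal functor between small categories induces a weak equivalence on homotopy colimits of arbitrary diagrams. Thus the entire content of the lemma reduces to verifying that $F$ is homotopy cofinal and then transporting this equivalence into the equivariant setting.

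For homotopy cofinality, I would verify that the undercategory $(V \downarrow F)$ has contractible nerve for every $V \in \bJ$. Since $\bJ$ has at most one morphism between any pair of objects, namely the inclusion of subspaces of $\mathbb{R}^\infty$ (when it exists), $(V \downarrow F)$ is equivalent to the full subposet of $\J$ consisting of those $\mathbf{n}$ with $V \subset V_n$. The exhausting hypothesis says exactly that every finite-dimensional $V$ lies in some $V_n$, hence in all $V_n$ from some point on; the undercategory is then a cofinal subposet of the filtered poset $\J$ and has contractible nerve. This is the content of \cite[A.5]{Lind}.

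To upgrade the comparison to an equivariant equivalence, I exploit that both $\J$ and $\bJ$ are discrete categories carrying trivial $G$-action, so the Bousfield--Kan formula realizes each homotopy colimit as a two-sided bar construction whose simplicial levels are wedges of copies of the diagram values indexed by chains in the relevant nerves. For any subgroup $H \leq G$, the $H$-fixed point functor commutes with these wedges and geometric realizations, so $(D_2)^H$ is the comparison map associated to the same functor $F$ applied to the $H$-fixed point diagram. Non-equivariant cofinality then yields a weak equivalence on $H$-fixed points for every $H \leq G$, giving the equivariant equivalence. The ``in particular'' clause is the special case $V_n = \mathbb{R}^n$. The main potential obstacle is checking that $D_2$ is genuinely the restriction map along $F$ on the nose, so that the cofinality machinery applies directly; given the description of $D_2$ as induced by $\mathbf{n} \mapsto \mathbb{R}^n$, this unwinds from the definitions.
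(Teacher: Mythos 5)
Your proposal is correct and follows essentially the same route as the paper: the paper also observes that an exhausting sequence is precisely a homotopy cofinal functor $\J \to \bJ$ (citing \cite[A.5]{Lind}) and concludes by combining cofinality invariance of homotopy colimits with the fact that fixed points commute with these (filtered, trivially $G$-indexed) homotopy colimits. You simply spell out the two ingredients (undercategory contractibility and the bar-construction/fixed-point commutation) that the paper cites or asserts.
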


\begin{proof}
The map $D_2$ is an equivariant equivalence because the inclusion
$\J \to \bJ$ is homotopy cofinal and passage to fixed points commutes with filtered homotopy colimits. 
\end{proof}

\subsection{The Hill-Hopkins-Ravenel norm}

We now review the special case of the norm construction that we need.
Given a non-equivariant orthogonal spectrum $X$, the smash power
$X^{\sma n}$ can be given a $C_n$-action and regarded as an orthogonal
$C_n$-spectrum indexed on the universe containing only trivial
representations (which we denote by $\bR^{\infty}$).  In order to
obtain an orthogonal $C_n$-spectrum indexed on the complete
$C_n$-universe $\aU$, we simply apply the point-set change-of-universe
functor $\aI_{\bR^{\infty}}^{U}$~\cite[\S V.1]{MM}.  This composite
defines the norm as
\[
N_e^{C_n} X = \aI_{\bR^{\infty}}^{U} X^{\sma n},
\]
and more generally the analogous construction can be used to define
$N_e^G X$ for any finite group $G$.  There is a more complicated
general construction of the norm $N_H^G X$ which takes orthogonal $H$-spectra
to orthogonal $G$-spectra for arbitrary subgroups $H \subset G$, but
since we do not need that herein we refer the interested reader
to~\cite[\S A.4]{HHR} for details.

A key technical insight of Hill-Hopkins-Ravenel is that the norm
$N_e^G$ preserves weak equivalences between cofibrant orthogonal
$G$-spectra and, in fact, cofibrant (commutative) ring orthogonal
$G$-spectra~\cite[\S B.2]{HHR}.  That is, cofibrant replacement allows us
to compute the derived functor of the norm.

We need one further fact about the norm, which relates the norm to a
description in terms of the canonical homotopy presentation and the
norm functor on spaces~\cite[\S 2.3.2]{HHR}.  For a space $A$, define 
\[ 
N_e^G(A) = \bigwedge_G A,
\]
regarded as a $G$-space via the permutation action.
Then we have the following description of the norm:
\[ N_e^G(X) = \hocolim_{V_n} S^{-V_n \otimes \rho} \sma
N_e^G(X_{V_n}), \]
where $\rho$ is the regular representation and $V_n$ is an increasing sequence of subspaces of $\bR^\infty$.  Note that as above, the
hocolim notation hides the fact that the diagram has backwards weak
equivalences.

Since for any exhausting sequence $V_{n}$ for $\mathbb R^{\infty}$, the sequence $V_{n}\otimes\rho$ is exhausting for a complete universe, the canonical homotopy presentation allows us another description of the norm.
\begin{corollary}
If $X$ is a cofibrant orthogonal spectrum, then there is a stable equivalence of orthogonal $G$-spectra
\[
N_{e}^{G}(X)\simeq\Big(W\mapsto\hocolim_{V\in\bJ} \Omega^{V\otimes\rho}\big(N_{e}^{G}(X_{V})\wedge S^{W}\big)\Big).
\]
\end{corollary}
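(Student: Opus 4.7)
The plan is to combine the two presentations of the norm already introduced: the formula
\[
N_{e}^{G}(X) = \hocolim_{V_n}\, S^{-V_{n}\otimes\rho}\sma N_{e}^{G}(X_{V_n})
\]
recalled just above, and Lemma~\ref{lem:ReCanonical}, which gives the canonical homotopy presentation of any orthogonal $G$-spectrum in terms of its levels. The key observation that makes the two compatible is the one noted right before the corollary: whenever $V_{n}$ is an exhausting sequence in $\bR^{\infty}$, the sequence $V_{n}\otimes\rho$ is an exhausting sequence of $G$-representations in a complete $G$-universe, so both presentations can be built from the same indexing data.

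Concretely, I would apply Proposition~\ref{prop:LoopsDesuspension} termwise to replace each shift-desuspension $S^{-V_{n}\otimes\rho}\sma N_{e}^{G}(X_{V_n})$ by the stably equivalent orthogonal $G$-spectrum $\Omega^{V_{n}\otimes\rho}\Sigma^{\infty}N_{e}^{G}(X_{V_n})$. Reading off the $W$th space of the resulting homotopy colimit (using that the $W$th-space functor commutes with the homotopy colimit of this diagram) yields a stable equivalence between $N_{e}^{G}(X)$ and the orthogonal $G$-spectrum
\[
W \mapsto \hocolim_{\bn\in\J} \Omega^{V_{n}\otimes\rho}\big(N_{e}^{G}(X_{V_n})\sma S^{W}\big).
\]
To enlarge the indexing category from $\J$ to $\bJ$, I would invoke the preceding lemma about $D_{2}$: any exhausting sequence defines a homotopy cofinal functor $\J\to\bJ$ by $\bn\mapsto V_{n}$, and since filtered homotopy colimits commute with passage to $G$-fixed points, the natural comparison
\[
\hocolim_{\bn\in\J}\Omega^{V_{n}\otimes\rho}\big(N_{e}^{G}(X_{V_n})\sma S^{W}\big)\to\hocolim_{V\in\bJ}\Omega^{V\otimes\rho}\big(N_{e}^{G}(X_{V})\sma S^{W}\big)
\]
is an equivariant equivalence for each $W$. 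Splicing this onto the previous zig-zag produces the claimed stable equivalence.

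The main subtlety is making sure these levelwise equivalences assemble into a stable equivalence of orthogonal $G$-spectra rather than just an equivalence at each level. This is where the cofibrancy hypothesis on $X$ plays its role: by the Hill-Hopkins-Ravenel result that $N_{e}^{G}$ preserves weak equivalences between cofibrant orthogonal spectra, the inputs $N_{e}^{G}(X_{V})$ are homotopically well-behaved as $V$ varies through $\bJ$, so the homotopy colimits compute the intended homotopy type. Compatibility of the structure maps in $W$ is automatic from the naturality of Proposition~\ref{prop:LoopsDesuspension}, which promotes the levelwise zig-zag to a zig-zag of orthogonal $G$-spectra and thereby to the desired stable equivalence.
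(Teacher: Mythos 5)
Your proposal is correct and follows essentially the same route as the paper, which treats this corollary as an immediate consequence of combining the canonical homotopy presentation of the norm with Proposition~\ref{prop:LoopsDesuspension} (as packaged in Lemma~\ref{lem:ReCanonical}) and the cofinality of the functor $\J\to\bJ$, $\bn\mapsto V_{n}$, from the $D_{2}$ lemma. Your remarks on the role of cofibrancy and on naturality are consistent with the paper's use of these ingredients, so there is nothing to add.
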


\section{Changing diagrams}\label{sec:diagrams}

In this section, we show that the ``change of indexing diagram'' maps
are equivariant stable equivalences.  

To study $D_1$, we need to establish an equivariant version of
B\"okstedt's telescope lemma comparing homotopy colimits over $\I$ to
homotopy colimits over $\J$ (i.e., telescopes).
For this, we need to recall that in equivariant homotopy, there is a
refined notion of ``connectedness'' which records the behavior of
fixed points for subgroups (as opposed to the coarser notion of
``inducing an isomorphism of homotopy groups through a
range''). Let $\nu$ be a function from conjugacy classes of
subgroups of $G$ to $\mathbb N\cup\{\infty\}$. A $G$-equivariant map
$f\colon X\to Y$ is said to be ``$\nu$-connected'' if
\[
f^{H}\colon X^{H}\to Y^{H}
\] 
is $\nu(H)$-connected for all subgroups $H \subset G$. With this
language, we can give an equivariant refinement of the telescope
lemma.   We use the formulation of the telescope lemma due to
Schlichtkrull~\cite[2.2]{Schlichtkrull}.  In the following, we will
refer to a functor $X\colon \I\to \Top_{G}$ as an $\I-G$-space.  

\begin{lemma}\label{lem:Telescope}
Let $X$ be an $\I-G$-space and suppose that each morphism $\bf n_{1}\to
\bf n_{2}$ in $\I$ with $n_{1}\geq n$ induces a $\nu_{n}$-connected map 
\[
X(n_{1})\to X(n_{2}).
\]
Then given any $m\geq n$, the natural map 
\[
X(m)\to \hocolim_{\I} X(i)
\]
is at least $(\nu_{n}-1)$-connected.

In particular, if for each $H$, $\nu_{n}(H)$ goes to infinity, then 
\[
\hocolim_{\J} X(i)\to \hocolim_{\I} X(i)
\]
is an equivariant equivalence.
\end{lemma}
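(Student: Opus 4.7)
My plan is to reduce both assertions to the non-equivariant telescope lemma by passing to $H$-fixed points for each subgroup $H \subset G$. Since $G$ acts trivially on the indexing category $\I$---an $\I$-$G$-space carries the $G$-action only on the values of the functor, not on the category itself---the standard bar-construction model of $\hocolim_{\I}$ is built levelwise from wedges indexed by the (non-equivariant) sets of composable arrows of $\I$, smashed with values of the functor, and then geometric realization is applied. Both operations commute with the right adjoint $(-)^{H}$, so there is a natural homeomorphism
\[
\big(\hocolim_{\I} X\big)^{H} \cong \hocolim_{\I} X^{H},
\]
and similarly with $\J$ in place of $\I$.

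With this identification, the first statement reduces diagram-by-diagram to the non-equivariant result. Fix $H$. By hypothesis, each morphism $\mathbf{n}_{1}\to \mathbf{n}_{2}$ in $\I$ with $n_{1}\geq n$ induces a $\nu_{n}(H)$-connected map $X(n_{1})^{H}\to X(n_{2})^{H}$, so the non-equivariant telescope lemma~\cite[2.2]{Schlichtkrull} applied to the $\I$-space $X^{H}$ shows that for any $m\geq n$ the map $X(m)^{H}\to \hocolim_{\I} X^{H}$ is $(\nu_{n}(H)-1)$-connected. Under the identification above this is exactly the $H$-fixed-point map of $X(m)\to \hocolim_{\I} X$, which gives the first claim since a map of $G$-spaces is $\nu$-connected precisely when each of its fixed-point maps is.

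The ``in particular'' statement follows by the same reduction: when $\nu_{n}(H)\to \infty$ for each $H$, the second assertion of~\cite[2.2]{Schlichtkrull} produces a weak equivalence $\hocolim_{\J} X^{H}\to \hocolim_{\I} X^{H}$, which under the identification is the $H$-fixed-point map of $\hocolim_{\J} X\to \hocolim_{\I} X$; since this holds for every $H$, the latter is an equivariant equivalence. The only delicate point in the whole proof is verifying the commutation of $(-)^{H}$ with $\hocolim_{\I}$, and this is automatic because the indexing category is discrete with trivial $G$-action; everything else is a formal transfer of the non-equivariant argument.
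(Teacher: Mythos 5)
Your proof is correct and follows essentially the same route as the paper's: both pass to $H$-fixed points, use that $G$ acts trivially on $\I$ so that $(-)^H$ commutes with $\hocolim_\I$, and then invoke Schlichtkrull's non-equivariant telescope lemma \cite[2.2]{Schlichtkrull} levelwise in $H$. One small wording caution: the commutation of $(-)^H$ with wedges and geometric realization is not a consequence of $(-)^H$ being a right adjoint (right adjoints preserve limits, not colimits); rather it uses specific facts about $G$-spaces --- fixed points commute with coproducts and with geometric realization of simplicial spaces --- so you should attribute the commutation to those facts rather than to adjointness.
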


\begin{proof}
Since the group $G$ does not act on $\I$, we know that the fixed
points commute with the homotopy colimit: there is a natural
equivalence 
\[
\hocolim_{\I}(X(i)^{H})\xrightarrow{\simeq} \big(\hocolim_{\I} X(i)\big)^{H},
\]
where the map is induced from the natural inclusion $X(i)^{H}\to X(i)$.

By assumption, for all $\bf n_{1}\to \bf n_{2}$ and $n_{1}\geq n$ and for all
$H$, we have 
\[
X(n_{1})^{H}\to X(n_{2})^{H}
\]
is $\nu_{n}(H)$-connected. By the usual telescope
lemma~\cite[2.2]{Schlichtkrull}, we conclude that 
\[
X(m)^{H}\to\hocolim_{\I} X(i)^{H}\xrightarrow{\simeq}\big(\hocolim_{\I} X(i)\big)^{H}
\]
is $(\nu_{n}(H)-1)$-connected. 

The proof of the second part is immediate from the first, since by
elementary cofinality arguments in $\J$, we see that under the given
assumptions, the map 
\[
\hocolim_{\J} X(j)\to\hocolim_{\I} X(i)
\]
is infinitely equivariantly connected.
\end{proof}

We will need some simple observations about the connectivity of a
norm in spaces or of the norm of a map between spaces. 

\begin{lemma}\label{lem:normconnectivity}
The equivariant connectivity of $N_e^G(X)$ is given by 
\[
\nu_{X}(H)=\tfrac{|G|}{|H|}(\conn(X)+1)-1\geq \tfrac{|G|}{|H|}\conn(X),
\]
while for a map $f\colon X\to Y$,
\[
\nu_{f}(H)=\conn(f)+(\tfrac{|G|}{|H|}-1)(\conn(X)+1).
\]
\end{lemma}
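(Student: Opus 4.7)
The plan is to reduce both claims to standard connectivity estimates for smash products, via the identification of the $H$-fixed points of the space-level norm with an iterated smash power.

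First, I would verify that for any based $G$-space $A$, the norm $N_e^G A = \bigwedge_G A$ (with $G$ permuting factors through left multiplication on $G$) satisfies
\[
(N_e^G A)^H \cong A^{\sma (|G|/|H|)},
\]
since the $H$-orbits on $G$ are the $|G|/|H|$ right cosets of $H$, and a point of the smash product is $H$-fixed precisely when its coordinates agree within each orbit. The analogous identification applies to morphisms: $(N_e^G f)^H = f^{\sma (|G|/|H|)}$.

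Given this, the formula for $\nu_X(H)$ is immediate from the standard iterated estimate $\conn(X^{\sma n}) \geq n(\conn(X)+1) - 1$, obtained by induction from the basic inequality $\conn(A \sma B) \geq \conn(A) + \conn(B) + 1$. The inequality $\nu_X(H) \geq \tfrac{|G|}{|H|}\conn(X)$ is then trivial since $|G|/|H| \geq 1$.

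For the map formula, set $n = |G|/|H|$ and factor $f^{\sma n}$ as the composite
\[
X^{\sma n} \to Y \sma X^{\sma (n-1)} \to Y^{\sma 2} \sma X^{\sma (n-2)} \to \cdots \to Y^{\sma n},
\]
whose $i$-th arrow is $\id^{\sma (i-1)} \sma f \sma \id^{\sma (n-i)}$, i.e., $f$ smashed with the identity on a space of connectivity at least $(n-1)(\conn(X)+1) - 1$, assuming $\conn(Y) \geq \conn(X)$. The map-version of the smash estimate $\conn(f \sma \id_Z) \geq \conn(f) + \conn(Z) + 1$ (obtained by noting $\mathrm{Cof}(f \sma \id_Z) = \mathrm{Cof}(f) \sma Z$ and using $\conn(\mathrm{Cof}(f)) \geq \conn(f)$) makes each step at least $\conn(f) + (n-1)(\conn(X)+1)$-connected, and so the composite inherits the same bound. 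The main technical subtlety is the implicit hypothesis $\conn(Y) \geq \conn(X)$, which is automatic in the regime $\conn(f) \geq \conn(X)$ (from the long exact sequence of the map) --- and this is the range relevant for subsequent applications of the lemma.
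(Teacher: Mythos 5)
Your proposal follows essentially the same route as the paper: identify the $H$-fixed points of $N_e^G(X)$ with the diagonal smash power $X^{\sma(|G|/|H|)}$ (the paper phrases this as $N_H^G(Y)\to N_e^G(Y)$ being a homeomorphism on $H$-fixed points) and then invoke the standard non-equivariant connectivity estimates for smash powers of spaces and maps. Your explicit telescoping factorization of $f^{\sma n}$, and the flagged hypothesis $\conn(Y)\geq\conn(X)$, just spell out what the paper dismisses as ``standard arguments,'' and that hypothesis does hold in the paper's application, where $f$ is the structure map whose connectivity exceeds that of its source.
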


\begin{proof}
Both results rely on the diagonal: if $Y$ has the trivial action, the map 
\[
Y^{|G|/|H|}=N_{H}^{G}(Y)\to N_{e}^{G}(Y)
\] 
is a homeomorphism on $H$-fixed points for any normal subgroup
$H$. Standard arguments about the connectivity of non-equivariant
smash powers of a map then show that the equivariant connectivity of
$N_{e}^{G}(X)$ and $N_{e}^{G}(f)$ is given by the listed formulas.
(See also~\cite[3.12]{BHM} for discussion of this phenomenon.)
\end{proof}

Similarly, we recall a small result about the connectivity of an equivariant mapping space (e.g., see~\cite[2.5]{HeMa97}).

\begin{lemma}\label{lem:mappingspaceconnectivity}
Suppose $X$ and $Y$ are $G$-spaces with $X$ having the structure of an $H$-CW complex.  Then
\begin{equation}\label{eq:conn}
\conn(\Map(X,Y)^{H}) \geq \min_{K \subset H} \big( \conn(Y^K) - \dim(X^K) \big).
\end{equation}
\end{lemma}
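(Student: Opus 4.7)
The plan is to prove the lemma by induction on the cellular filtration of $X$ as an $H$-CW complex. The crucial observation driving the argument is the equivariant adjunction
\[
\Map_H(H/K \times Z, Y) \cong \Map(Z, Y^K),
\]
which computes the fixed-point mapping space out of a single equivariant cell in terms of the ordinary mapping space into $Y^K$.

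I would first verify the base case when $X$ is a single equivariant cell $H/K \times D^n$. Here $\Map_H(X,Y) \simeq Y^K$, and since $X^K$ contains $D^n$ (via the trivial coset) and every other $X^L$ also has dimension at most $n$ or is empty, the bound $\conn(Y^K) \geq \conn(Y^K) - \dim(X^K)$ is one of the terms in the minimum on the right-hand side of \eqref{eq:conn}, which gives the claim immediately.

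For the inductive step, I would assume the bound for $X_{n-1}$ and consider attaching an equivariant $n$-cell along a map $H/K \times S^{n-1} \to X_{n-1}$, producing the pushout $X_n$. The cofiber sequence $X_{n-1} \to X_n \to H/K_+ \wedge S^n$ induces a fiber sequence
\[
\Omega^n (Y^K) \longrightarrow \Map_H(X_n, Y) \longrightarrow \Map_H(X_{n-1}, Y).
\]
The fiber has connectivity $\conn(Y^K) - n$, and since $\dim(X_n^K) \geq n$ this is at least $\conn(Y^K) - \dim(X_n^K)$, which is in turn at least $\min_L(\conn(Y^L) - \dim(X_n^L))$. The base, by induction, has connectivity at least $\min_L(\conn(Y^L) - \dim(X_{n-1}^L)) \geq \min_L(\conn(Y^L) - \dim(X_n^L))$ since $\dim(X_n^L) \geq \dim(X_{n-1}^L)$ for every $L$. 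The standard long exact sequence of a fibration then gives the desired connectivity estimate for $\Map_H(X_n, Y)$.

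Passing from the cellular filtration to all of $X$ is routine: for each $q$ the map $\Map_H(X, Y) \to \Map_H(X_q, Y)$ is a Serre fibration whose connectivity improves with $q$, so the bound for $X$ follows from the uniform bounds on the $X_q$. The main obstacle, if any, is bookkeeping the dimensions $\dim(X^L)$ across all subgroups $L \subset H$ simultaneously through the induction; the key simplification is that attaching an $H/K$-cell only affects $X^L$ for those $L$ subconjugate to $K$, and in those cases the new $L$-dimension is at most the old one or $n$, which is exactly what is needed to keep the inductive inequality balanced.
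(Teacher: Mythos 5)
The paper itself does not prove this lemma --- it is quoted from Hesselholt--Madsen \cite[2.5]{HeMa97} --- and your argument (induction over the equivariant skeleta of $X$, using the adjunction $\Map(H/K_+\wedge S^n,Y)^H\cong\Omega^n(Y^K)$ together with the fibration given by restriction along $X_{n-1}\subset X_n$) is precisely the standard proof lying behind that citation, and it is correct. The one point you gloss that deserves a sentence is the $\pi_0$ issue in the fibration step: the fiber of $\Map(X_n,Y)^H\to\Map(X_{n-1},Y)^H$ over a map $f$ is the space of extensions of $f$ over the new cell, which is empty unless the obstruction class of $f\circ\alpha$ in $\pi_{n-1}(Y^K)$ (up to basepoint bookkeeping) vanishes; in the range where the asserted bound is nonvacuous one has $\conn(Y^K)\geq\dim(X_n^K)\geq n$, so this obstruction vanishes, the restriction map hits every component of the base, and each nonempty fiber is a torsor over, hence homotopy equivalent to, $\Omega^n(Y^K)$. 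With that remark the long exact sequence argument gives exactly the estimate you state, and the passage from the skeleta to $X$ is harmless (the bound is vacuous unless the relevant fixed sets are finite-dimensional, and in the paper's applications $X=S^{n\rho}$ is a finite $H$-CW complex).
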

This will allow us to easily get very coarse lower bounds.

\begin{proposition}\label{prop:freudenthal}
Let $G = C_k$. For a space $X$, the suspension map
\[
E^{m\rho}\colon N_{e}^{G} X \to \Omega^{m\rho} \Sigma^{m\rho} N_{e}^{G}X
\]
has connectivity at least
\[
\nu(H)=2\tfrac{k}{|H|}\conn(X).
\]
\end{proposition}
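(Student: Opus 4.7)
My strategy is to reduce to an equivariant version of the Freudenthal suspension theorem applied to $Y=N_e^{C_k}X$ and $V=m\rho$. The required form of equivariant Freudenthal (compare Hauschild, Lewis--May--Steinberger, or the appendix of~\cite{HHR}) asserts that for a based $G$-CW complex $Y$ and $G$-representation $V$, the suspension unit $E^V\colon Y\to\Omega^V\Sigma^V Y$ is $\nu$-connected on $H$-fixed points with
\[
\nu(H)\geq\min_{K\subseteq H}\bigl(2\conn(Y^K)+1\bigr),
\]
where any corrections from the dimensions of $V^K$ are favorable for $V=m\rho$, since $\dim V^K=mk/|K|$ is monotonically non-increasing as $|K|$ grows; any such correction therefore only improves the bound for $K \subsetneq H$.

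Applying this to $Y=N_e^{C_k}X$, Lemma~\ref{lem:normconnectivity} gives $\conn(Y^K)\geq\tfrac{k}{|K|}\conn(X)$. Since every subgroup $K\subseteq H\leq C_k$ has order dividing $|H|$, in particular $|K|\leq|H|$ and therefore $\tfrac{k}{|K|}\geq\tfrac{k}{|H|}$. Consequently the minimum of $\conn(Y^K)$ over $K\subseteq H$ is attained at $K=H$ and is bounded below by $\tfrac{k}{|H|}\conn(X)$. Plugging into the Freudenthal bound yields $\nu(H)\geq 2\tfrac{k}{|H|}\conn(X)+1\geq 2\tfrac{k}{|H|}\conn(X)$, as required.

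The main obstacle is pinning down the precise form of the equivariant Freudenthal bound; in particular one needs a version that is governed by $\min_{K\subseteq H}\conn(Y^K)$ without unfavorable $\dim V^K$ corrections. If a clean off-the-shelf citation is not available, one can establish the required estimate directly using the lemmas already in hand: filter $S^{m\rho}$ by its fixed-point subspheres $S^{(m\rho)^K}$ as $K$ runs over subgroups of $C_k$, and apply Lemma~\ref{lem:mappingspaceconnectivity} stage-by-stage to the mapping spaces into $\Sigma^{m\rho}N_e^{C_k}X$, whose fixed-point connectivities are supplied by Lemma~\ref{lem:normconnectivity}. An obstruction-theoretic analysis of the successive subquotients of this filtration should recover the desired bound on $\conn\bigl((E^{m\rho})^H\bigr)$.
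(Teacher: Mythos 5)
There is a genuine gap: the form of the equivariant Freudenthal suspension theorem you invoke is not correct, and the part you wave away is exactly the heart of the argument. The standard statement (the one the paper uses) says that $E^{V}\colon Y\to\Omega^{V}\Sigma^{V}Y$ is $\nu$-connected provided (i) $\nu(H)\leq 2\conn(Y^{H})+1$ whenever $V^{H}\neq\{0\}$, and (ii) $\nu(H)\leq\conn(Y^{K})$ for every \emph{proper} $K\subsetneq H$ with $V^{K}\neq V^{H}$. For proper subgroups the constraint is $\conn(Y^{K})$, \emph{without} the factor of $2$ and the $+1$; your claimed bound $\nu(H)\geq\min_{K\subseteq H}\bigl(2\conn(Y^{K})+1\bigr)$ is strictly stronger than the theorem and is false in general (for example, for $Y=EC_{2}{}_{+}\wedge S^{n}$ and $V=\rho$, the $C_{2}$-fixed points of the target see transfer classes in degrees around $n$, not $2n$, even though $Y^{C_{2}}$ is contractible). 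The ``corrections from $\dim V^{K}$'' you appeal to are not what is at issue, and they do not restore the factor of $2$ for proper subgroups.

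Because of this, your computation takes the minimum of $\conn(Y^{K})$ at $K=H$ and doubles it, which skips the real verification. With the correct theorem one must check, for every $K\subsetneq H$, that
\[
2\tfrac{k}{|H|}\conn(X)\;\leq\;\conn\bigl((N_{e}^{C_k}X)^{K}\bigr)=\tfrac{k}{|K|}\bigl(\conn(X)+1\bigr)-1,
\]
and this holds only because $K$ is a \emph{proper} subgroup of a cyclic group, so $|K|\leq|H|/2$ and hence $\tfrac{k}{|K|}\geq 2\tfrac{k}{|H|}$: the norm's fixed-point connectivity doubles when you pass to a proper subgroup, compensating for the missing factor of $2$ in condition (ii). This inequality is precisely the check the paper's proof performs, and it is absent from your argument. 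Your fallback sketch (filtering $S^{m\rho}$ by fixed-point subspheres and running obstruction theory) would, if carried out, only reprove the correct Freudenthal statement with the weak proper-subgroup constraint, so the same comparison would still have to be made. To repair the proof, quote the theorem in the two-condition form above and verify condition (ii) using Lemma~\ref{lem:normconnectivity} together with $|K|\leq|H|/2$ for $K\subsetneq H$.
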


\begin{proof}
We use the equivariant Freudenthal suspension theorem\footnote{For the interested reader, the first condition following is perhaps what one would expect: the connectivity is exactly what the usual Freudenthal suspension theorem would predict, just for each of the fixed points. The second condition is somewhat less obvious: this condition guarantees that the connectivity of $H$-equivariant maps between two spaces is the same as the ordinary maps between their $H$-fixed points.}. Recall that if $V$ is a representation of $G$, then the suspension map
\[
E^{V}\colon Y\to \Omega^{V}\Sigma^{V}Y
\]
is $\nu_{V}$-connected for any $\nu_{V}$ satisfying
\begin{enumerate}
\item For all $H$ such that $V^{H}\neq\{0\}$, we have $\nu_{V}(H)\leq 2\conn(Y^{H})+1$.
\item For all $K\subset H$ with $V^{K}\neq V^{H}$, we have $\nu_{V}(H)\leq\conn(Y^{K})$.
\end{enumerate}

We now consider $V=m\rho$ and $Y=N_{e}^{G}(X)$. For all subgroups $H$ of $G$, $V^{H}\neq \{0\}$, and if $K\subsetneq H$, then $V^{K}\neq V^{H}$. If we let $\nu$ be as in the proposition statement, then the essential condition to check is the second one. So we must show that for all $K\subsetneq H$,
\[
\nu(H)=2\tfrac{k}{|H|}\conn(X)\leq\conn(Y^{K})=\tfrac{k}{|K|}\big(\conn(X)+1\big)-1.
\]
However, since $K\neq H$, we know that this holds. Thus by the
equivariant Freudenthal suspension theorem, the map
$E^{m\rho}$ is $\nu$-connected. 
\end{proof}

To prove that the map $D_{1}$ is an equivariant equivalence, we will
directly verify the needed connectivity hypotheses to apply
Lemma~\ref{lem:Telescope}.  


\begin{theorem}
Let $X$ be a cofibrant orthogonal spectrum with underlying symmetric spectrum
$\tilde X$.  Then the maps 
\[
\xymatrix{
\hocolim\limits_{\J} \Omega^{kn}(N_e^{C_k}\big(\tilde X_{n}) \sma S^W\big)
\ar[r]^-{D_1} & \hocolim\limits_{\I} \Omega^{kn}\big(N_e^{C_k}(\tilde X_{n}) \sma S^W\big) \\
}
\]
are equivalences of $C_k$-spaces and therefore assemble to a stable
equivalence of orthogonal $C_k$-spectra.
\end{theorem}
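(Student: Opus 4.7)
The approach is to invoke the equivariant telescope lemma (Lemma~\ref{lem:Telescope}) for the $\I$-$C_k$-space
\[
Y(\bn) = \Omega^{n\rho}\bigl(N_e^{C_k}(\tilde X_n) \wedge S^W\bigr),
\]
using the canonical identification $S^{kn}\cong N_e^{C_k}(S^n) \cong S^{n\rho}$ to interpret the loop coordinate as a regular-representation sphere. It suffices to show that for each subgroup $H\subset C_k$ the map $Y(\bn_1)\to Y(\bn_2)$ has $H$-connectivity tending to infinity as $n_1\to\infty$. Any injection in $\I$ factors as a bijection followed by the standard subset inclusion, and bijections act by homeomorphisms via the $\Sigma_n$-action on $\tilde X_n$, so it suffices to treat the case of the standard inclusion $\bn_1\hookrightarrow \bn_2$.

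For such an inclusion, the induced map decomposes as $\Omega^{n_1\rho}$ of the equivariant Freudenthal suspension
\[
N_e^{C_k}(\tilde X_{n_1}) \wedge S^W \longrightarrow \Omega^{(n_2-n_1)\rho}\bigl(N_e^{C_k}(\tilde X_{n_1}) \wedge S^{(n_2-n_1)\rho} \wedge S^W\bigr),
\]
composed with $\Omega^{n_2\rho}$ of the norm of the structure map
\[
N_e^{C_k}(\tilde X_{n_1}) \wedge S^{(n_2-n_1)\rho} \wedge S^W \longrightarrow N_e^{C_k}(\tilde X_{n_2})\wedge S^W,
\]
where we have used the monoidality of $N_e^{C_k}$ and the identification $N_e^{C_k}(S^{n_2-n_1}) \cong S^{(n_2-n_1)\rho}$. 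Because $X$ is cofibrant as an orthogonal spectrum, the spaces $\tilde X_n$ become increasingly connected and the non-equivariant structure maps $\tilde X_{n_1}\wedge S^{n_2-n_1}\to \tilde X_{n_2}$ have connectivity that grows without bound in $n_1$. Applying Proposition~\ref{prop:freudenthal} to the first map and Lemma~\ref{lem:normconnectivity} to the second produces equivariant connectivity estimates that grow linearly in $n_1$ at each subgroup, and Lemma~\ref{lem:mappingspaceconnectivity} is then used to pass from connectivities of the spaces involved to connectivities of the loop spaces $Y(\bn_i)$.

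The main obstacle I anticipate is book-keeping. First, Proposition~\ref{prop:freudenthal} is stated for $N_e^{C_k}(X)$ rather than $N_e^{C_k}(X)\wedge S^W$, and since $(A\wedge S^W)^K$ does not split as a smash of fixed points, Lemma~\ref{lem:mappingspaceconnectivity} has to be applied carefully on each fixed-point set to extend Freudenthal across the extra smash factor. Second, taking $\Omega^{n_1\rho}$ reduces $H$-connectivity by $n_1 k/|H|$, so one must verify that the connectivities produced by Proposition~\ref{prop:freudenthal} (which at $H$ grow like $2(k/|H|)\conn(\tilde X_{n_1})$) and by Lemma~\ref{lem:normconnectivity} for the structure map strictly outpace this cost. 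Linear growth of $\conn(\tilde X_n)$ in $n$ for cofibrant $X$ guarantees that the net connectivity bound does tend to $\infty$ at each $H$. Once the hypothesis of Lemma~\ref{lem:Telescope} is verified for each $W$, the resulting $C_k$-equivalences $D_1$ assemble into a stable equivalence of orthogonal $C_k$-spectra by naturality in $W$.
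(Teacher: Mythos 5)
Your overall strategy---factor the maps in the B\"okstedt $\I$-$C_k$-space through an equivariant Freudenthal suspension followed by the norm of a structure map, estimate connectivities with Proposition~\ref{prop:freudenthal}, Lemma~\ref{lem:normconnectivity}, and Lemma~\ref{lem:mappingspaceconnectivity}, and then invoke Lemma~\ref{lem:Telescope}---is exactly the paper's, but there is a genuine gap at the step where you assert that ``because $X$ is cofibrant as an orthogonal spectrum, the spaces $\tilde X_n$ become increasingly connected and the structure maps $\tilde X_{n_1}\sma S^{n_2-n_1}\to \tilde X_{n_2}$ have connectivity that grows without bound.'' This is false for a general cofibrant orthogonal spectrum, because cofibrant objects are built from cells $S^{-V}\sma(-)$ with $\dim V$ unbounded. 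For example, $X=\bigvee_{d\geq 0}S^{-\bR^d}$ is cofibrant, but $\tilde X_n=\bigvee_{d\leq n}\sJ(\bR^d,\bR^n)$ contains the summand $\sJ(\bR^n,\bR^n)=O(n)_+$, so $\conn(\tilde X_n)=-1$ for all $n$; moreover the structure map $S^1\sma \tilde X_n\to\tilde X_{n+1}$ misses the new summand $O(n+1)_+$ entirely, so its connectivity does not tend to infinity either. For such an $X$ the connectivity hypothesis of Lemma~\ref{lem:Telescope} simply fails for the diagram $\bn\mapsto\Omega^{n\rho}(N_e^{C_k}(\tilde X_n)\sma S^W)$, so the direct argument you propose cannot be run on an arbitrary cofibrant $X$; the conclusion is still true, but it cannot be obtained by verifying the telescope hypothesis for $X$ itself.

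The missing ingredient is a reduction step before the connectivity estimates. The paper first notes that both the source and target of $D_1$ preserve stable equivalences of orthogonal spectra (using that $\tilde X$ is semistable and Shipley's results) and commute with filtered homotopy colimits (since $S^{n\rho}$ is compact), and then uses the canonical homotopy presentation of Section~\ref{sec:canonical} to write $X$ as a filtered homotopy colimit of spectra of the form $S^{-V}\sma Y$ with $Y$ cofibrant and $\dim V=d$ \emph{fixed}. For such spectra $\tilde X_n=\sJ(V,\bR^n)\sma Y$, the connectivity of the spaces and of the structure maps grows linearly in $n$ with a fixed offset depending only on $d$, and your estimates (which are the paper's: $\nu_1(H)\geq\tfrac{k}{|H|}(3n_1-4d-2)$ and $\nu_2(H)\geq\tfrac{k}{|H|}(n_1-2d)$, hence $\nu(H)\geq n_1-4d-2$) then do verify the hypothesis of Lemma~\ref{lem:Telescope}. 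Once you insert this reduction, the rest of your outline (including the bookkeeping issues you flag about the $S^W$ factor and the cost of $\Omega^{n_1\rho}$, and the assembly of level equivalences into a stable equivalence) goes through as in the paper.
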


\begin{proof}
The first part of the conclusion implies the second; since we show
that the map $D_1$ establishes level equivalences of equivariant
orthogonal spectra, it gives an equivariant stable equivalence. 

Since $\tilde X$ is the restriction of an orthogonal spectrum, it is a semistable symmetric spectrum \cite{shipley}. Thus Shipley's version of B\"okstedt's Telescope Lemma (\cite[Corollary 3.1.7]{shipley}) shows that $D_{1}$ induces an underlying equivalence.

Since both sides of the map preserve stable equivalences of orthogonal $G$-spectra (i.e.,
see~\cite[2.1.9]{shipley} for the righthand side), we can simplify our
analysis using the canonical homotopy presentation. Both the domain and range commute with filtered homotopy colimits in $X$ because $S^{n\rho}$ is compact, and the canonical homotopy
presentation allows us to express $X$ as a filtered homotopy colimit
of objects of the form $S^{-V}\wedge Y$.  Therefore, it suffices to prove this
result for orthogonal spectra of the form $X = S^{-V}\wedge Y$ with $Y$
cofibrant and $V$ a $d$-dimensional vector space.  In this case, the
underlying symmetric spectrum has $\tilde X_n = \sJ(V,\bR^n) \wedge
Y$.

To show that $D_1$ induces an
equivariant equivalence, we verify that the $\I-C_k$-space 
\[
\bn\mapsto \Omega^{n\rho}\big(N_{e}^{C_{k}}(\tilde X_{n})\wedge S^{W}\big)
\]
has the appropriate connectivity properties for
Lemma~\ref{lem:Telescope}. Here we have replaced the more traditional
$\Omega^{kn}$ with the visibly equal functor $\Omega^{n\rho}$ to
strengthen the equivariant connection in the reader's mind. 

The presence of $S^{W}$ does not affect the structure of the argument; since it is constant relative to $\I$, having it will perform an affine shift of the connectivities we compute (in fact, making them increasingly connective). We can therefore get a lower bound by calculating in the case that $W=0$.

The connectivity of $\tilde X_n$ is at least the connectivity $(2n - 2d - 1)$ of the Thom space $\sJ(V,\bR^n)$.
Similarly, the connectivity of the structure map $S^m \wedge \tilde X_n \to \tilde X_{m+n}$ is at least $(n+m-2d)$.

Let $\bf n_{1}\to \bf n_{2}$ be a map in $\I$, and consider the following
factorization of the structure map in the B\"okstedt $\I-G$-space:
\[
\xymatrix{
  \Omega^{n_1\rho}\big(N_e^{C_k}(\tilde X_{n_1}) \sma S^{W}\big) \ar_{\Omega^{n_{1}\rho}E^{(n_{2}-n_{1})\rho}}[d] \ar[drr] & & \\
\Omega^{n_1\rho} \Omega^{\rho(n_2 - n_1)} \Sigma^{(n_2 - n_1)\rho}
\big(N_e^{C_k}(\tilde X_{n_1}) \sma S^W\big) \ar[rr]_-{\Omega^{n_{2}\rho}N_{e}^{C_k}\sigma} & & \Omega^{n_2\rho}\big(N_e^{C_k}(\tilde X_{n_2}) \sma S^W\big)
}
\]
where $E^{(n_{2}-n_{1})\rho}$ is the equivariant suspension and where $\sigma$ is the structure map in $X$: $\Sigma^{(n_{2}-n_{1})}X_{n_{1}}\to X_{n_{2}}$.

The structure map in the $\I-G$-space is at least as connected as 
\[
\min \big( \conn(\Omega^{n_{1}\rho}E^{(n_{2}-n_{1})\rho}),\conn(\Omega^{n_{2}\rho}N_{e}^{C_k}\sigma) \big),
\]
and we analyze each piece individually.

We first analyze the connectivity of $\Omega^{n_{1}\rho}E^{(n_{2}-n_{1})\rho}$. Applying Proposition~\ref{prop:freudenthal} and Lemma~\ref{lem:mappingspaceconnectivity}, we get a lower bound for the connectivity of the map $\Omega^{n_{1}\rho}E^{(n_{2}-n_{1})\rho}$:    
\begin{multline*}
\nu_{1}(H)=\conn\big((\Omega^{n_{1}\rho} E^{(n_{2}-n_{1})\rho})^{H}\big)\\
\geq \min_{K\subset H}\big(2\tfrac{k}{|K|}\conn(\tilde X_{n_{1}})-\tfrac{k}{|K|}n_{1}\big) \geq \tfrac{k}{|H|}\big(3n_1 - 4 d - 2\big).
\end{multline*}

Similarly, applying the norm $N_{e}^{C_{k}}$ to the structure map
$\Sigma^{n_{2}-n_{1}}X_{n_{1}}\to X_{n_{2}}$ yields our map 
\[
N_{e}^{C_{k}}\sigma\colon \Sigma^{(n_{2}-n_{1})\rho}N_{e}^{C_k}(X_{n_{1}})\cong N_{e}^{C_k}(\Sigma^{n_{2}-n_{1}}X_{n_{1}})\to N_{e}^{C_k}(X_{n_{2}}).
\]

By Lemma~\ref{lem:normconnectivity}, this norm produces a map that, on $H$-fixed points, has connectivity at least
\begin{multline*}
(n_1+n_2-2d) + 
(\tfrac{k}{|H|}-1)\big((2 n_1 - 2d - 1)+(n_2 - n_1)+1\big)\\
=\tfrac{k}{|H|}(n_1 +n_2- 2d).
\end{multline*}

By Lemma~\ref{lem:mappingspaceconnectivity} we can also estimate the connectivity of the $(n_{2}\rho)$-fold loops of this map, getting
\begin{multline*}
\nu_{2}(H)=\conn\big((\Omega^{n_{2}\rho}N_{e}^{C_{k}}\sigma)^{H}\big)\\
\geq\min_{K\subset H}\big(\tfrac{k}{|K|}(n_1+n_2-2d)-\tfrac{k}{|K|}n_{2}\big)= \tfrac{k}{|H|}(n_1 - 2d). 
\end{multline*}

The factorization of the structure map above shows us that the
structure map in the $\I-G$-space has connectivity at least
\[
\nu(H)=\min \big( \nu_{1}(H),\nu_{2}(H)\big) \geq \min \big(3n_1-4d-2, n_1-2d\big) \geq n_1-4d-2.
\]
(and usually a good bit more so).

Since this term goes to infinity,
Lemma~\ref{lem:Telescope} implies that $D_{1}$ is an equivariant
equivalence, as required.
\end{proof}


\section{Equivariance}\label{sec:diagonal}

The maps $D_1$ and $D_2$ allow us reduce from diagrams over $\I$ with
a trivial action to diagrams over $\J$ with a trivial
action of $C_k$.  But one of the interesting aspects of the equivariant
structure of the B\"okstedt smash product is that $C_k$ acts on the
diagram the homotopy colimit is indexed over.  In this section, we
show that the ``diagonal'' map $\Delta$ induces an equivariant
equivalence. This allows us to reduce to the case of homotopy colimits
on which the group acts trivially, as analyzed in the previous section.

The map $\Delta = \Delta_{1}$ is really part of a compatible family of maps, $\Delta_{i}$,
indexed on the divisor poset of $k$. Analyzing the equivariant
homotopy type of the B\"okstedt construction is facilitated by a refinement of Hesselholt-Madsen's determination of the fixed points of the B\"okstedt construction.
%

Suppose $k=d\cdot s$. Then we have a $C_k$-equivariant diagonal map
\begin{equation}
  \label{eq:diagonalmap}
\xymatrix{
\hocolim\limits_{\I^d} \Omega^{s(n_1+\ldots+n_d)}\left(N_{C_d}^{C_k}\left(\bigwedge\limits_{i=1}^d \tilde{X}_{n_i}\right) \sma S^W\right) \ar[d]^{\Delta_{d}} \\
\hocolim\limits_{\I^k} \Omega^{n_1+\ldots+n_k}\left(\bigwedge\limits_{i=1}^k \tilde{X}_{n_i} \sma S^W\right). \
} 
\end{equation}
The following proposition describes a key property of this generalized
diagonal map. In this, we fix our model for the homotopy colimit as the two-sided bar construction.

\begin{proposition}\label{prop:hmondrugs}
Suppose $k=d\cdot s$.  Then the map of equation~(\ref{eq:diagonalmap})
induces a homeomorphism on passage to $C_r$-fixed points for
$C_s \subseteq C_r \subseteq C_k$.
\end{proposition}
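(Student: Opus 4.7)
The plan is to use the two-sided bar construction model $\hocolim_\aC F = |B_\bullet(\ast,\aC,F)|$ and verify the claim simplicial-level by simplicial-level. Since $C_r$ is finite and the bar constructions are proper, passage to $C_r$-fixed points commutes with geometric realization. At each level, $B_n(\ast,\aC,F) = \coprod_{c_0\to\cdots\to c_n} F(c_0)$ is a coproduct indexed by a $C_r$-set, and the fixed points of such a coproduct decompose as $\coprod_{\alpha\,\mathrm{fixed}} Y_\alpha^{C_r}$. This reduces the problem to identifying $C_r$-fixed chains in the indexing categories together with $C_r$-fixed points in the corresponding values of the diagram.

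First I would identify the $C_r$-fixed subcategories. The cyclic $C_k$-action on $\I^k$ has $C_r$-fixed subcategory consisting of period-$(k/r)$ tuples and period-$(k/r)$ morphisms, naturally isomorphic to $\I^{k/r}$. On the LHS, the $C_k$-action on $\I^d$ factors through the quotient $C_k/C_s\cong C_d$ acting by cyclic permutation of the $d$ entries; the subgroup $C_r/C_s\subseteq C_d$ acts by shift-by-$(k/r)$, with fixed subcategory again $\I^{k/r}$. The diagonal functor $\Delta_d\colon\I^d\to\I^k$ sends period-$(k/r)$ $d$-tuples to period-$(k/r)$ $k$-tuples, establishing a bijection on $C_r$-fixed chains.

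Next I would analyze $C_r$-fixed points on the functor values. For a period-$(k/r)$ tuple $\underline n$, set $Z = \bigwedge_{i=1}^{k/r}\tilde X_{n_i}$; then the smash target on both sides agrees canonically with $Z^{\wedge r}\wedge S^W$, with $C_r$ cyclically permuting the $r$ copies of $Z$. On the LHS the loop source $S^{s(n_1+\cdots+n_d)}$ has trivial $C_r$-action, so the $C_r$-fixed points are $\Omega^N\bigl((Z^{\wedge r})^{C_r}\wedge(S^W)^{C_r}\bigr)\cong\Omega^N(Z\wedge(S^W)^{C_r})$ via the diagonal identification $(Z^{\wedge r})^{C_r}\cong Z$. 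On the RHS the loop source carries the permutation representation of $C_r$, isomorphic to $V'\otimes\rho_r$ for $V' = \bR^{n_1+\cdots+n_{k/r}}$, and the $C_r$-fixed points are $C_r$-equivariant maps $(S^{V'})^{\wedge r}\to Z^{\wedge r}\wedge(S^W)^{C_r}$; such equivariant maps into a cyclic smash power are determined by their first coordinate, and hence also equal $\Omega^N(Z\wedge(S^W)^{C_r})$.

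The main obstacle will be verifying that $\Delta_d$ actually intertwines these two identifications, sending the LHS diagonal-loop description to the RHS first-coordinate description. I expect this to be a careful but routine point-set unwinding, using the explicit form of $\Delta_d$ at the bar-construction level together with the canonical $C_k$-equivariant identification between $N_{C_d}^{C_k}(\bigwedge_{i=1}^d\tilde X_{n_i})$ and the cyclically-acted $\bigwedge_{i=1}^k\tilde X_{n_i}$ for period-$d$ tuples.
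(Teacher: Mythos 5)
Your overall strategy---model the homotopy colimits by the two-sided bar construction, commute $C_r$-fixed points past geometric realization and past the coproducts making up each simplicial level, and then match fixed chains in the indexing categories---is the same as the paper's.  Your identification of the $C_r$-fixed subcategories of $\I^d$ and of $\I^k$ with $\I^{k/r}$, and the observation that $\Delta_d$ restricts to a bijection on fixed chains, are both correct.  However, your analysis of the $C_r$-fixed points of the functor values contains two errors, and the last paragraph flags as an unresolved ``main obstacle'' something that does not actually need to be checked once those errors are repaired.

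First, the loop source on the left-hand side does \emph{not} carry the trivial $C_r$-action.  The diagram on the left of~(\ref{eq:diagonalmap}) is exactly the restriction of the diagram on the right along $\Delta_d\colon\I^d\to\I^k$: for $\underline m\in\I^d$ one has a canonical identification $\Omega^{s(m_1+\cdots+m_d)}\bigl(N_{C_d}^{C_k}(\bigwedge_i\tilde X_{m_i})\wedge S^W\bigr) = \Omega^{n_1+\cdots+n_k}\bigl(\bigwedge_i\tilde X_{n_i}\wedge S^W\bigr)$ with $\underline n=\Delta_d(\underline m)$, and this is how the $C_k$-structure and the $C_k$-equivariance of $\Delta_d$ arise.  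In particular, for a $C_r$-fixed $\underline m$ the group $C_r$ acts on the loop coordinates of the left-hand side exactly as it does on the right-hand side, by permutation (this is the ``$\Omega^{kn}$ is really $\Omega^{n\rho}$'' point made in Section~\ref{sec:diagrams}), so the identification of the $C_r$-fixed mapping space as $\Omega^N\bigl((Z^{\wedge r})^{C_r}\wedge(S^W)^{C_r}\bigr)$ fails.

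Second, the claim that a $C_r$-equivariant map out of $(S^{V'})^{\wedge r}$ into a cyclic smash power is determined by its ``first coordinate'' is not true.  The norm $N_e^{C_r}$ in spaces is not a left adjoint of restriction, and there is no projection off a smash factor of $Z^{\wedge r}$.  Concretely, take $r=2$, $V'=\bR$, $Z=S^1$, $W=0$: your claim would give $\pi_0\Map_{C_2}(S^{\rho_2},S^{\rho_2})\cong\pi_0\Omega S^1\cong\bZ$, but by the equivariant Hopf theorem $[S^{\rho_2},S^{\rho_2}]_{C_2}$ is free of rank $2$, detected by the two degrees on $e$- and $C_2$-fixed points.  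So your computed description of the $C_r$-fixed functor value on the right is also incorrect.

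The repair is simpler than what you attempted.  Because the left diagram is the pullback of the right one along $\Delta_d$, at a $C_r$-fixed $\underline m\in(\I^d)^{C_r}$ the functor value on the left is \emph{literally equal} to the functor value on the right at $\Delta_d(\underline m)\in(\I^k)^{C_r}$, with the same $C_r$-action, and the map $\Delta_d$ at that spot is the identity.  Since you have already matched the fixed chains, the map on $C_r$-fixed bar constructions is levelwise a homeomorphism, hence a homeomorphism after realization.  There is nothing left to intertwine, so the ``main obstacle'' you flag does not arise.  This is essentially the content of the paper's proof: it identifies $B_\ssdot(\ast,\I^d,F\circ\Delta_d)$ with a subsimplicial space of $B_\ssdot(\ast,\I^k,F)$, commutes fixed points with realization and products, invokes Hesselholt--Madsen for the $d=1$ case, and then deduces the general case by observing that $\Delta_1$ factors through $\Delta_d$ and running a downward induction on the group.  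Your corrected argument unpacks the same mechanism directly rather than factoring through the $d=1$ case.
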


\begin{proof}
Our proof is a slight elaboration of that of Hesselholt-Madsen for the case $s=k$ \cite[\S2.4]{HeMa97}. The model for the homotopy colimit identifies the simplicial space realizing the homotopy colimit over $\I^d$ with a (nicely embedded) subsimplicial space of the homotopy colimit for $\I^k$, and as $s$ varies, these spaces are appropriately nested. Passage to fixed points commutes with geometric realization and with the products making up the two-sided bar construction for the homotopy colimit. This is what allows Hesselholt-Madsen to observe that the map $\Delta_1$ induces a homeomorphism on $C_k$-fixed points. Since $\Delta_1$ factors through $\Delta_d$ for any $d$, and since the inclusion is inducing a homeomorphism on fixed points in this case, we learn that $\Delta_d$ also induces an homeomorphism of $C_k$-fixed points (since these are really statements about fixed subspaces of a big ambient space). Downward induction on the group provides the rest of the result.
\end{proof}

We use this proposition to show the following:
\begin{theorem}
The map
\[
\xymatrix{
\hocolim\limits_{\I} \Omega^{kn}(N_e^{C_k} \tilde{X}_{n} \sma
S^W) \ar[r]^-{\Delta_1} & \hocolim\limits_{\I^k} \Omega^{n_1 + \ldots +
n_k}(\tilde{X}_{n_1} \sma \ldots \sma \tilde{X}_{n_k} \sma S^W)  
}
\]
induces a stable equivalence of equivariant orthogonal spectra.
\end{theorem}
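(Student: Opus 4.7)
The plan is to prove that $\Delta_1$ induces a $C_r$-equivariant weak equivalence on each space $W$ of the spectrum for every subgroup $C_r \subseteq C_k$; this gives a level equivalence and hence the required stable equivalence of orthogonal $C_k$-spectra.

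Fix $r$ dividing $k$, and set $s = r$ and $d = k/r$. The map $\Delta_1$ factors as $\Delta_1 = \Delta_d \circ \Delta_{1,d}$, where
\[
\Delta_{1,d} \colon \hocolim_\I \Omega^{kn}\bigl(N_e^{C_k}(\tilde X_n) \wedge S^W\bigr) \to \hocolim_{\I^d} \Omega^{s(n_1 + \cdots + n_d)}\Bigl(N_{C_d}^{C_k}\bigl(\bigwedge_{i=1}^d \tilde X_{n_i}\bigr) \wedge S^W\Bigr)
\]
is induced by the diagonal $\I \hookrightarrow \I^d$, $\mathbf{n} \mapsto (\mathbf{n}, \ldots, \mathbf{n})$, together with the canonical identification $N_e^{C_k}(\tilde X_n) = N_{C_d}^{C_k}(N_e^{C_d}(\tilde X_n))$ coming from the transitivity of the norm. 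Proposition~\ref{prop:hmondrugs} applied with this choice of $s$ shows that $\Delta_d$ induces a homeomorphism on $C_r$-fixed points, so it remains to show that $\Delta_{1,d}$ induces a weak equivalence on $C_r$-fixed points.

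The key observation is that the $C_k$-action on $\I^d$ (inherited from the cyclic permutation action on $\I^k$ via the embedding of $\I^d$ as the $s$-periodic tuples) factors through the quotient $C_k/C_s \cong C_d$: a generator of $C_k$ acts on $\I^d$ as a cyclic shift of one coordinate, and this has order $d$. Hence $C_r = C_s$ acts trivially on $\I^d$, and $C_r$-fixed points commute with the homotopy colimit on both sides. Under this identification, $\Delta_{1,d}$ on $C_r$-fixed points reduces to a non-equivariant map of homotopy colimits induced by the diagonal $\I \to \I^d$, with the pointwise map at $\mathbf{n}$ being the identity on $C_r$-fixed values at diagonal objects. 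This reduces the problem to a cofinality question for the diagonal $\I \to \I^d$, which can be handled by a B\"okstedt-style telescope argument: pass to the filtered subcategory $\J$ on both sides via Lemma~\ref{lem:Telescope}, use that the diagonal $\J \to \J^d$ is strictly cofinal (since $\J$ is a filtered poset, each undercategory is isomorphic to a nonempty upward-closed subposet of $\bN$), and apply an iterated telescope comparison on the $\I^d$ side.

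The main technical obstacle is establishing the connectivity estimate on the $C_r$-fixed-point functor $(\mathbf{n}_1, \ldots, \mathbf{n}_d) \mapsto \bigl[\Omega^{s \sum n_i}\bigl(N_{C_d}^{C_k}(\bigwedge_i \tilde X_{n_i}) \wedge S^W\bigr)\bigr]^{C_r}$ on $\I^d$ needed to invoke the (multivariate) telescope lemma. This connectivity estimate requires a detailed analysis of the $C_r$-fixed points of $N_{C_d}^{C_k}(\bigwedge_{i=1}^d \tilde X_{n_i})$ as the $n_i$ vary, via the formula $N_{C_d}^{C_k}(Y) = \bigwedge_{C_k/C_d} Y$ together with the analysis of the $C_s$-action on the set $C_k/C_d$, in direct analogy with the norm-connectivity estimates from Lemma~\ref{lem:normconnectivity} already used in the proof of the $D_1$ theorem.
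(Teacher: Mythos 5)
Your proposal is correct and follows essentially the same route as the paper: factor $\Delta_1$ through the intermediate $\I^d$-homotopy colimit, dispose of $\Delta_d$ via Proposition~\ref{prop:hmondrugs}, observe that $C_s$ acts trivially on $\I^d$ so fixed points commute with the hocolim, and reduce the remaining comparison to the telescope lemma together with cofinality of the diagonal $\J \to \J^d$. The one step the paper glosses over---the $\J^d$-to-$\I^d$ telescope comparison, dismissed with ``for the same reason, using Fubini's theorem for homotopy colimits''---is exactly the technical point you flag; you are right that this requires connectivity estimates for the $C_r$-fixed points of $N_{C_d}^{C_k}\bigl(\bigwedge_i \tilde X_{n_i}\bigr)$ in analogy with Lemma~\ref{lem:normconnectivity}, and identifying that gap explicitly is a small improvement on the paper's exposition rather than a deviation from its method.
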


\begin{proof}
Once again, we show that the map is a weak equivalence of $C_k$-spaces
for each $W$; the maps therefore assemble to form a level equivalence
and hence a stable equivalence of equivariant orthogonal spectra.

We need to show that the map is an equivalence on $C_s$-fixed points
for each $s \mid k$. Consider the composite 
\[ 
\xymatrix{
\hocolim\limits_{\I} \Omega^{kn}(X_n \sma \ldots \sma X_n \sma S^W) \ar[d]  \\
\hocolim\limits_{\I^d} \Omega^{s(n_1+\ldots+n_d)}((X_{n_1} \sma \ldots \sma X_{n_d})^{\sma s} \sma S^W) \ar[d] \\
\hocolim\limits_{\I^k} \Omega^{n_1+\ldots+n_k}(X_{n_1} \sma \ldots \sma X_{n_k} \sma S^W),
} 
\]
where $k = d \cdot s$ as before.
The second map is an equivalence on $C_s$-fixed points by
Proposition~\ref{prop:hmondrugs}, and we claim that the first map is
an equivalence on $C_s$-fixed points as well.  Consider the diagram
\[ 
\xymatrix{
\hocolim\limits_{\J} \Omega^{kn}(\bigwedge\limits_k X_n \sma S^W) \ar[r] \ar[d] & \hocolim\limits_{\J^d} \Omega^{s(n_1+\ldots+n_d)}((\bigwedge\limits_{i=1}^d X_{n_i})^{\sma s} \sma S^W) \ar[d] \\
\hocolim\limits_{\I} \Omega^{kn}(\bigwedge\limits_k X_n \sma S^W)
\ar[r] & \hocolim\limits_{\I^d} \Omega^{s(n_1+\ldots+n_d)}((\bigwedge\limits_{i=1}^d X_{n_i})^{\sma s} \sma S^W)
} 
\]
The point is that $C_s$ acts trivially on all the indexing categories
in the diagram, and the diagonal map $\J \to \J^d$ is homotopy
cofinal.  Since the passage to fixed points commutes with these
homotopy colimits, we conclude that the top horizontal map is a
$C_s$-equivalence.

The left hand side vertical map is an equivariant equivalence by 
Section~\ref{sec:diagrams}; the right hand side vertical map is an
equivariant equivalence for the same reason, using Fubini's theorem
for homotopy colimits (e.g., see~\cite[24.9]{ChacholskiScherer}). 
\end{proof}

This completes the proof of Theorem \ref{Main}. We have shown that the
vertical maps in the main comparison diagram (Equation~\ref{eq:main})
from Section~\ref{sec:intro} do indeed assemble to weak equivalences
of equivariant orthogonal spectra.  We have also identified the
orthogonal $C_k$-spectrum at the bottom of the comparison diagram with
the spectrum $N^{C_k}_e(X)$.

\appendix
\section{On Equivariant Homotopy Colimits and Cofinality}

Our proof of the equivalence of the map $\Delta_{1}$ was a direct argument. A more categorical approach uses an equivariant notion of homotopy cofinality.  For our purposes, we write $G$-category to denote an internal category in $G$-sets. A $G$-functor from a $G$-category $\cC$ to $\Top_G$ is a
functor which commutes with the $G$-actions.  The fixed points of a
$G$-category $\cC$ is an ordinary category, and the restriction of a
$G$-functor $F$ to $\cC^G$ is an ordinary functor into $G$-spaces. We
stress that the $G$-action is {\em{not}} assumed to consist of morphisms in
the category. This is essential to ensure that the resulting homotopy
colimits have a $G$-action. The prototypical example is $G=C_k$,
$\cC=\I^k$, and our functor is B\"okstedt's construction. 

The following lemma is essentially obvious (though we remark that it
is only true in spaces), relating the fixed points of a homotopy
colimit of a $G$-functor to the homotopy colimit of the fixed points. 

\begin{lemma}
If $\cC$ is a $G$-category and $F\colon \cC\to\Top_G$ is a
$G$-functor, then the natural map of $G$-spaces
\[
\hocolim_{\cC^G} F(c)\to\hocolim_{\cC}F(c)
\]
is an equivalence on $G$-fixed points, and the map of spaces
\[
\hocolim_{\cC^G}F(c)^G\to\big(\hocolim_{\cC}F(c)\big)^G
\]
is an equivalence.
\end{lemma}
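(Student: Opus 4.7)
The plan is to work with the two-sided bar construction model of the homotopy colimit, $\hocolim_{\cC} F \cong |B_\bullet(*, \cC, F)|$, where
\[
B_n(*, \cC, F) \;=\; \coprod_{c_0 \to \cdots \to c_n} F(c_0)
\]
is the coproduct over chains of $n$ composable morphisms in $\cC$. Since $\cC$ is a $G$-category and $F$ is a $G$-functor, $G$ acts levelwise: an element $g$ sends a chain $c_0 \to \cdots \to c_n$ with basepoint $x \in F(c_0)$ to the chain $g c_0 \to \cdots \to g c_n$ with basepoint $g \cdot x \in F(g c_0)$.

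The heart of the argument is a levelwise computation of $G$-fixed points. Because the coproduct is taken in $\Top$, a $G$-fixed point of $B_n(*, \cC, F)$ must lie in a $G$-fixed component; such components correspond precisely to chains of $n$ composable morphisms in the $G$-fixed subcategory $\cC^G$. Within such a component, a $G$-fixed element of $F(c_0)$ is an element of $F(c_0)^G$, so
\[
B_n(*, \cC, F)^G \;=\; \coprod_{c_0 \to \cdots \to c_n \text{ in } \cC^G} F(c_0)^G \;=\; B_n(*, \cC^G, F(-)^G).
\]
The same identification holds for $B_n(*, \cC^G, F)^G$ (since $\cC^G$ already carries trivial $G$-action on objects and morphisms), and under these identifications the map induced by the inclusion $\cC^G \hookrightarrow \cC$ is, levelwise on fixed points, the identity of $B_\bullet(*, \cC^G, F(-)^G)$.

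The proof concludes by taking geometric realization and invoking the fact that for a simplicial $G$-space in $\Top_G$, $G$-fixed points commute with geometric realization: $|X_\bullet|^G \cong |X_\bullet^G|$. This gives $(\hocolim_{\cC^G} F)^G \cong (\hocolim_{\cC} F)^G \cong \hocolim_{\cC^G} F(-)^G$, which establishes both claims of the lemma simultaneously (they are in effect the same statement, as the trivial $G$-action on $\cC^G$ makes $(\hocolim_{\cC^G} F)^G$ identify with $\hocolim_{\cC^G} F(-)^G$). There is no serious obstacle here; the essential subtlety is that every step depends on working in spaces, where $G$-fixed points commute with both coproducts and geometric realization. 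This is exactly why the excerpt warns that the lemma fails in enriched settings such as spectra, where neither of those compatibilities persists.
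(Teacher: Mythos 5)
Your proof is essentially the same as the paper's: both choose the two-sided bar construction as the model for the homotopy colimit, compute $G$-fixed points levelwise on simplices, and then use the fact that geometric realization commutes with fixed points in $\Top$. If anything, you are more explicit than the paper about the key levelwise step (identifying fixed components of the coproduct over chains with chains in $\cC^G$), which the paper compresses into the remark that fixed points commute with products.
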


\begin{proof}
We choose as a model for $\hocolim$ the two-sided bar construction
$B(*,\cC,F)$. This is the geometric realization of a simplicial space,
and since $G$-fixed point functor commutes with geometric realization, we
need only show that this equality is true for the $r$-simplices of the
bar construction for each $r$. Since we are in spaces, the fixed
point space of a product is the product of the fixed point spaces,
and we conclude
\[
(\hocolim_{\cC}F(c))^G\cong \hocolim_{\cC^G}F(c)^G.
\]
In fact, since we have chosen a particular model, these spaces are
actually homeomorphic.  The right most term is visibly
$(\hocolim_{\cC^G}F(c))^G$. 
\end{proof}

The following trivial generalization of the preceding result is
surprisingly useful:

\begin{corollary}\label{cor:Subcats}
Suppose that $\cC$ is a $G$-category and $\D$ is a $G$-subcategory such that $\cC^G\subset \D$. Then for any $G$-functor $F$ from $\cC$ to $\Top_G$, we have an equivalence of fixed points
\[
\big(\hocolim_{\D}F(d)\big)^G\to\big(\hocolim_{\cC}F(c)\big)^G.
\]
\end{corollary}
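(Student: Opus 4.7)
The plan is to reduce the claim to the preceding lemma via the key identification $\D^G = \cC^G$ of fixed subcategories.

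First I would verify that $\D^G$ and $\cC^G$ agree as subcategories of $\cC$. The inclusion $\D^G \subseteq \cC^G$ is immediate, since $\D$ is a subcategory of $\cC$ and $G$-fixedness is intrinsic (an object/morphism of $\D$ is $G$-fixed iff its image in $\cC$ is $G$-fixed). Conversely, the hypothesis $\cC^G \subseteq \D$ says that every $G$-fixed object and morphism of $\cC$ already lies in $\D$, and such an element is then automatically $G$-fixed in $\D$, giving $\cC^G \subseteq \D^G$.

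Applying the preceding lemma to both $\cC$ and $\D$ then yields a commutative square
\[
\xymatrix{
\hocolim\limits_{\D^G} F(d)^G \ar[r]^-{\cong} \ar[d] & \big(\hocolim\limits_{\D} F(d)\big)^G \ar[d] \\
\hocolim\limits_{\cC^G} F(c)^G \ar[r]^-{\cong} & \big(\hocolim\limits_{\cC} F(c)\big)^G
}
\]
whose horizontal maps are homeomorphisms in the two-sided bar construction model, whose left vertical map is induced by the inclusion $\D^G \hookrightarrow \cC^G$, and whose right vertical map is the map under consideration. Commutativity of the square is just functoriality of $B(*,-,F)$ in the indexing category, applied to the inclusion $\D \hookrightarrow \cC$ and its restriction to fixed subcategories. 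By the previous paragraph, the left vertical map is the identity, and hence the right vertical map is a homeomorphism, as desired.

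The main (and essentially only) obstacle is recognizing that the hypothesis $\cC^G \subseteq \D$ forces $\D^G = \cC^G$; once that is in hand, the corollary is a formal consequence of the preceding lemma together with the naturality of the bar construction, with no additional cofinality or equivariant connectivity input required.
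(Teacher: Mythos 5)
Your proposal is correct and follows essentially the same route as the paper: the hypotheses force $\D^G=\cC^G$, and then the preceding lemma identifies both $\big(\hocolim_{\D}F\big)^G$ and $\big(\hocolim_{\cC}F\big)^G$ with the homotopy colimit over this common fixed subcategory, compatibly with the inclusion $\D\hookrightarrow\cC$. The commutative-square bookkeeping you add is just an expanded version of the paper's one-line argument.
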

\begin{proof}
The assumptions on $\cC$ and $\D$ ensure that $\cC^G=\D^G$. The result follows from identifying both with this common colimit.
\end{proof}

To complete the proof of the equivalence of $\Delta_{1}$, we can directly appeal to an equivariant form of Quillen's Theorem A \cite[Theorem 3.10]{vflor}.

\begin{theorem}
Let $F\colon\cC\to\D$ be a functor such that for every $d\in \D$, the
over-category $F\downarrow d$ is $Stab(d)$-equivariantly contractible. Then $F$ induces an equivalence on homotopy colimits.
\end{theorem}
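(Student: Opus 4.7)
My plan is to reduce the equivariant statement, subgroup by subgroup, to the classical non-equivariant Theorem~A, exploiting the first lemma of this appendix which says that passage to $H$-fixed points commutes with homotopy colimits of $H$-functors on $H$-categories.

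Fix a $G$-functor $\Phi \colon \D \to \Top_G$; I want to show that the natural map $\hocolim_{\cC} \Phi F \to \hocolim_{\D} \Phi$ is a $G$-equivalence. It suffices to verify that for every subgroup $H \subseteq G$ the induced map on $H$-fixed points is a weak equivalence of spaces. Restricting all $G$-actions to $H$-actions and applying the appendix lemma (with $G$ replaced by $H$) to both sides, the $H$-fixed points of the two homotopy colimits are identified with $\hocolim_{\cC^H} (\Phi F)^H$ and $\hocolim_{\D^H} \Phi^H$ respectively. I am therefore reduced to showing that the restricted functor $F^H \colon \cC^H \to \D^H$ is homotopy cofinal in the ordinary, non-equivariant sense.

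Next I would invoke the classical cofinality form of Theorem~A: it suffices to check that for each $d \in \D^H$ the over-category $F^H \downarrow d$ is contractible. Unwinding definitions, an object of $F^H \downarrow d$ is a pair $(c,f \colon F(c) \to d)$ with $c \in \cC^H$ and $f$ fixed by the induced $H$-action on the hom-space of $\D$; since $d$ is itself $H$-fixed, this is precisely an $H$-fixed object of the $\Stab(d)$-category $F \downarrow d$, and an analogous bijection holds on morphisms. In other words, $F^H \downarrow d \iso (F \downarrow d)^H$.

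Finally, because $d \in \D^H$ forces $H \subseteq \Stab(d)$, the hypothesis that $F \downarrow d$ is $\Stab(d)$-equivariantly contractible implies it is $H$-equivariantly contractible upon restriction of the action; passing to $H$-fixed points of an $H$-equivariant contracting homotopy then yields a contraction of $(F \downarrow d)^H$, giving the required contractibility of $F^H \downarrow d$. The main subtlety I expect to have to handle carefully is the identification $F^H \downarrow d \iso (F \downarrow d)^H$: it is just a definitional unpacking, but it hinges on the precise definition of a $G$-category emphasized in the appendix, namely that the $G$-action is an external action on objects and morphisms rather than being given by morphisms of $\cC$ itself.
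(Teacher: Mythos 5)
The paper itself offers no proof of this statement---it is quoted from Villarroel-Flores \cite[Theorem 3.10]{vflor}---so your strategy of reducing subgroup-by-subgroup to the classical Theorem~A is a reasonable self-contained route, and most of its ingredients are sound: the reduction of $H$-fixed points of both homotopy colimits via the first lemma of the appendix, the purely definitional identification $F^H \downarrow d \cong (F \downarrow d)^H$ for $d \in \D^H$ (which indeed depends on the $G$-action being external rather than by morphisms of $\cC$), and the observation that $d \in \D^H$ gives $H \subseteq \Stab(d)$, so $\Stab(d)$-contractibility restricts to contractibility of the $H$-fixed points.

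The genuine gap is a variance error in the classical input. You interpret the conclusion (as the paper's application does) as homotopy cofinality: for every $G$-diagram $\Phi$ on $\D$, the map $\hocolim_{\cC}\Phi F \to \hocolim_{\D}\Phi$ is an equivalence. The classical criterion for that conclusion is contractibility of the \emph{under}-categories $d \downarrow F^H$ (objects $(c, d \to F^H(c))$), not of the over-categories $F^H \downarrow d$. Contractibility of over-categories only yields Quillen's Theorem~A for nerves, i.e.\ the case of the constant point diagram ($B\cC^H \to B\D^H$ is an equivalence); it does not give cofinality for general $\Phi$. A minimal non-equivariant counterexample: let $\cC = \{0\}$ include into $\D = \{0 \to 1\}$. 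Both over-categories $F \downarrow 0$ and $F \downarrow 1$ are single points, yet for $\Phi$ with $\Phi(0) = \ast$ and $\Phi(1) = S^1$ the map $\hocolim_{\cC}\Phi F \simeq \ast \to \hocolim_{\D}\Phi \simeq S^1$ is not an equivalence. So the step ``it suffices to check that $F^H \downarrow d$ is contractible'' would fail. To be fair, this mismatch is already latent in the statement as printed (over-category hypothesis paired with a homotopy-colimit conclusion); your argument is repaired verbatim by replacing over-categories with under-categories throughout, using $(d \downarrow F)^H \cong d \downarrow F^H$ and hypothesizing $\Stab(d)$-contractibility of $d \downarrow F$. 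That corrected form is also what the intended application needs: for the diagonal $\J \to \J^k$, the under-category of $(\mathbf{n}_1,\ldots,\mathbf{n}_k)$ is the poset of $\mathbf{m}$ with $m \geq \max_i n_i$, which is $\Stab$-fixed and filtered, hence equivariantly contractible.
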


In our context, this version of Quillen's Theorem A shows that the natural map $\J\to \J^{k}$ is equivariantly cofinal. By the B\"okstedt Telescope Lemma and the commutativity of the obvious diagram of homotopy colimits, we conclude that $\Delta_{1}$ is an equivariant equivalence.


\begin{thebibliography}{00}

\bibitem{BM} A.J.~Blumberg and M.~A.~Mandell.
\newblock The homotopy theory of cyclotomic spectra.
\newblock Preprint, arXiv:1303.1694.

\bibitem{Bokstedt} M.~B{\"o}kstedt.
\newblock Topological {H}ochschild homology.
\newblock Preprint.

\bibitem{BHM}
M.~B{\"o}kstedt, W.~C. Hsiang, and I.~Madsen.
\newblock The cyclotomic trace and algebraic {$K$}-theory of spaces.
\newblock {\em Invent. Math.}, 111(3):465--539, 1993.

\bibitem{Dundas} B.~I.~Dundas.
\newblock Relative {$K$}-theory and topological cyclic homology.
\newblock Acta Math. {\bf 179} (2) (1997), 223--242.

\bibitem{ChacholskiScherer} W.~Chacholski and J.~Scherer.
\newblock Homotopy theory of diagrams.
\newblock Mem. of the Amer. Math. Soc. {\bf 155} (736), 2002.

\bibitem{EKMM}
A.~D. Elmendorf, I.~Kriz, M.~A. Mandell, and J.~P. May.
\newblock Rings, modules, and algebras in stable homotopy theory.
\newblock {\em Mathematical Surveys and Monographs} {\bf 47},
Amer. Math. Soc., Providence, RI, 1997. 
\newblock With an appendix by M. Cole.

\bibitem{Goodwillie}
T.G.~Goodwillie.
\newblock Relative algebraic {$K$}-theory and cyclic homology.
\newblock {\em Ann. of Math. (2)}, 124(2):347--402, 1986.

\bibitem{HeMa97} L.~Hesselholt and I.~Madsen.
\newblock On the {K}-theory of finite algebras over {W}itt vectors of perfect fields
\newblock Topology {\bf 36} (1997), 29--101.

\bibitem{HHR} M.~A.~Hill and M.~J.~Hopkins and D.~C.~Ravenel.
\newblock On the non-existence of elements of {K}ervaire invariant one.
\newblock Preprint, arXiv:0908.3724.

\bibitem{Lind} J.~Lind.
\newblock Diagram spaces, diagram spectra, and spectra of units.
\newblock Algebraic and Geometric Topology {\bf 12} (2013), 1857--1935.

\bibitem{LNR} S.~Lun{\o}e-Nielsen and J.~Rognes.
\newblock The topological {S}inger construction.
\newblock Documenta Mathematica 17 (2012) 861-909.

\bibitem{Madsen} I.~Madsen.
\newblock Algebraic {K}-Theory and traces. 
\newblock Current Development in Mathematics, International Press
Inc., Boston (1995), 191--321.

\bibitem{MM} M.~A.~Mandell and J.~P.~May.
\newblock Equivariant orthogonal spectra and {$S$}-modules.
\newblock Mem. of the Amer. Math. Soc. {\bf 159} (755), 2002.

\bibitem{mmss} M.~A.~Mandell and J.~P.~May and S.~Schwede and
B.~Shipley
\newblock Model categories of diagram spectra.
\newblock Proc. London Math. Soc. {\bf 82} (2001), 441--512.

\bibitem{MandellShipley} M.~A.~Mandell and B.~Shipley.
\newblock A telescope comparison lemma for {THH}
\newblock Top. Appl., {\bf 117} (2002), 161--174.

\bibitem{McCarthy}
R.~McCarthy.
\newblock Relative algebraic {$K$}-theory and topological cyclic homology.
\newblock {\em Acta Math.}, 179(2):197--222, 1997.

\bibitem{Schlichtkrull} C.~Schlichtkrull.
\newblock Units of ring spectra and their traces in algebraic {K}-theory.
\newblock Geom. \& Top. {\bf 8} (2004), 645-–673.

\bibitem{shipley} B.~Shipley.
\newblock Symmetric spectra and topological {H}ochschild homology.
\newblock K-theory {\bf 19} (2) (2000), 155--183. 

\bibitem{vflor} R.~Villarroel-Flores.
\newblock Equivariant homotopy type of categories and pre-ordered sets.
\newblock University of Minneapolis, dissertation, (1999).

\end{thebibliography}
\end{document}